\documentclass[12pt]{article}

\usepackage{amssymb}
\usepackage{newtxtext}
\usepackage{amsmath,amsfonts,amsthm,amssymb}
\usepackage[utf8]{inputenc}
\usepackage[T1]{fontenc}    
\usepackage{url}        
\usepackage{geometry}
\usepackage{booktabs}       
\usepackage{amsfonts}       
\usepackage{nicefrac}       
\usepackage{microtype}      
\usepackage{xcolor}         
\usepackage{subfigure}
\usepackage[inline]{enumitem}
\usepackage{cases}
\usepackage{algorithm}
\usepackage{algpseudocode}
\usepackage{graphicx}
\usepackage{dblfloatfix}
\usepackage{float}
\theoremstyle{plain}
\usepackage{makecell}
\usepackage{bm}
\usepackage{setspace}
\usepackage{thmtools}
\usepackage{thm-restate}
\usepackage{cases}
\usepackage{empheq}
\definecolor{dark-gray}{gray}{0.3}
\definecolor{dkgray}{rgb}{.4,.4,.4}
\definecolor{dkblue}{rgb}{0,0,.5}
\definecolor{medblue}{rgb}{0,0,.75}
\definecolor{rust}{rgb}{0.5,0.1,0.1}
\definecolor{darkblue}{rgb}{0,0.08,0.45}
\usepackage[colorlinks]{hyperref}
\usepackage{biblatex}
\hypersetup{urlcolor=rust}
\hypersetup{citecolor=darkblue}
\hypersetup{linkcolor=blue}

\DeclareMathOperator*{\maximize}{maximize}

\DeclareMathOperator*{\argmin}{argmin}

\newtheorem{theorem}{Theorem}
\numberwithin{theorem}{section}
\newtheorem{lemma}{Lemma}
\numberwithin{lemma}{section}

\numberwithin{remark}{section}

\numberwithin{corollary}{section}

\numberwithin{definition}{section}

\numberwithin{assumption}{section}

\declaretheorem[name=Theorem,numberwithin=section]{thm-rest}
\declaretheorem[name=Lemma,numberwithin=section]{lem-rest}
\declaretheorem[name=Corollary,numberwithin=section]{cor-rest}
\declaretheorem[name=Definition,numberwithin=section]{def-rest}
\declaretheorem[name=Proposition,numberwithin=section]{prop-rest}
\declaretheorem[name=Assumption,numberwithin=section]{ass-rest}

\allowdisplaybreaks

\DeclareMathAlphabet{\mathcalorigin}{OMS}{cmsy}{m}{n}

\usepackage[colorinlistoftodos,prependcaption,backgroundcolor=black!5!white,bordercolor=red]{todonotes}
\usepackage[capitalise]{cleveref}
\title{\vspace{-2.0cm}Battery Bidding under Price Uncertainty \\ in 
Wholesale Electricity Markets}
\author{
Vincent Yinjun-Wang\footnote{Stanford MS\&E. Email: yinjunw@stanford.edu} 
\and
Madeleine Udell\footnote{Stanford MS\&E. Email: udell@stanford.edu}
}
\date{}
\begin{document}
\maketitle
\begin{abstract}
Grid-scale batteries increasingly influence outcomes in wholesale electricity markets, but their observed bid patterns remain difficult to interpret. In particular, bids that appear to reflect strategic withholding may instead arise from rational operations under price uncertainty and risk management. We develop an asset-level model of a price-taking battery that submits stepwise buy and sell bid curves in the day-ahead market under a finite set of price scenarios. The battery chooses quantity--price pairs to maximize a mean--CVaR objective subject to physical and market constraints. A direct formulation is a mixed-integer linear program, but we show that its integer decisions can be removed, yielding an exact linear programming reformulation suitable for empirical analysis. Our empirical results deliver three insights. First, withholding behavior can arise even without market power, because scarce stored energy and uncertain future prices increase the value of holding energy. Second, the effect of uncertainty depends on the state of charge: when stored energy is scarce, greater uncertainty raises sell bid prices, whereas when stored energy is abundant it can lower them. Third, risk management reshapes bid curves into layered structures that secure profitable execution across a broad set of scenarios while preserving some exposure to rare but valuable price spikes.
\end{abstract}
\newpage
\section{Introduction}

Deep decarbonization of the electric power sector requires operating the grid reliably with a much larger share of variable renewable generation, and grid-scale batteries are central to this transition. By shifting energy from periods of surplus generation to periods of scarcity, batteries can reduce reliance on fossil ``peaker'' plants, improve reliability, and facilitate renewable integration. But this flexibility is inherently intertemporal: a battery must decide not only how much to charge or discharge now, but also how much to keep stored energy for potentially more valuable future opportunities under substantial price uncertainty.

Battery storage is rapidly evolving from a niche technology to a resource whose bidding decisions can materially affect prices and dispatch outcomes in wholesale electricity markets. In the United States, the number of grid-connected batteries grew roughly $10\times$ from 2021 to 2023, and installed capacity nearly doubled again in 2024 \cite{plumerPopovich2024GiantBatteries}. In the California ISO (CAISO), battery discharge capability rose from roughly 500~MW in 2020 to roughly 13{,}000~MW by December 2024 \cite{caiso_2024_battery_special_report}, equivalent to about 27\% of CAISO's 2024 peak demand of 48{,}323~MW \cite{caiso_2024_battery_special_report}. In 2024, batteries supplied about 8.6\% of CAISO energy during the early-evening peak (5pm--9pm), while charging reached about 14.7\% of system load during solar hours (10am--1pm)\cite{caiso_2024_battery_special_report}. At these penetration levels, battery bidding behavior is no longer negligible in grid operations.

At the same time, observed battery bids in the wholesale electricity markets are difficult to interpret. CAISO's battery reports document persistent gaps between submitted bids and realized day-ahead market (DAM) prices: in 2024, average buy (charge) bids were \$81/MWh below the DAM price, while average sell (discharge) bids were \$230/MWh above it \cite{caiso_2024_battery_special_report}; the corresponding numbers were \$89/MWh and \$145/MWh in 2023 \cite{caiso_2023_battery_special_report}. These patterns resemble \emph{withholding} behavior: bids that systematically avoid being scheduled in the DAM, such as unusually high sell bids or unusually low buy bids. Such patterns naturally raise concern, because if batteries avoid day-ahead commitments and shift activity to the real-time market (RTM), system operators may commit more expensive resources in the DAM, increasing costs and emissions. Yet these bid patterns need not imply market power. CAISO's reports emphasize that existing market power mitigation has had limited impact on battery dispatch \cite{caiso_2023_battery_special_report,caiso_2024_battery_special_report}. This leaves an important interpretive question: are these bid patterns evidence of market power exercise, or can they arise from legitimate operations?\footnote{Such withholding behavior can also arise from the economics of meeting resource adequacy obligations, under which capacity is monetized separately and SoC must be preserved for must-offer/must-perform requirements. Here, we instead ask whether the behavior can arise from economic considerations within the wholesale electricity market itself.}

Answering this question requires understanding the economics of \emph{competitive} battery dispatch. A conventional generator is willing to produce only when the market price exceeds its marginal production cost, which is determined by fuel cost and heat rate \cite{borenstein2002measuring}. For a battery, the analogous marginal cost of discharging is primarily intertemporal: using 1~MWh now depletes stored energy that may be more valuable in a later hour. In principle, that opportunity cost is itself shaped by uncertainty in market fundamentals that drive prices, including load, renewable output, transmission conditions, and generator outages. More importantly, when bids must be submitted before prices are realized, the relevant charge and discharge thresholds need not reflect intertemporal opportunity cost alone. Dispatching energy under market uncertainty exposes batteries to unavoidable risk, which must be priced into their bids as a risk premium. The resulting bid patterns can look like withholding, even when they arise from fully competitive, price-taking behavior. This observation motivates our central question:

\begin{center}
\emph{How should battery operators handle price uncertainty and manage risk in wholesale markets?}
\end{center}

\subsection{Contributions}

\paragraph{Modeling contributions.}
We study battery bidding for energy arbitrage in the DAM, with the goal of explaining how electricity price uncertainty and risk management affect observed bid patterns. Existing bidding models typically optimize hourly charge and discharge quantities over a planning horizon, committing those quantities ex ante rather than choosing the bid prices (as thresholds) at which the battery is willing to buy or sell. Such formulations are not well suited to our question, because the object of interest here is the bid curve itself. We therefore develop a higher-fidelity stylized bidding model with two features. First, the model optimizes a stepwise bid curve, represented as quantity--price pairs\footnote{A sell bid is cleared if the electricity price is greater than or equal to the bid price; similarly, a buy bid is cleared if the electricity price is less than or equal to the bid price.} (MWh and \$/MWh), so that the model matches how batteries participate in practice. Second, uncertainty is represented through a finite set of hourly price scenarios, and the operator's risk preferences are modeled through a mean--CVaR objective that trades off expected revenue against downside risk.

A natural formulation of this problem is a mixed-integer linear program (MILP), because market clearing depends on whether realized prices cross submitted bid prices. We show that, for the sampled-scenario model, these integer decisions can be removed exactly. The key observation is that with finitely many sampled prices, only the relative position of each bid price among those sampled prices matters for clearing. Therefore, bid prices can be restricted to the sampled price set without loss of optimality, and the resulting clearing patterns can be precomputed and embedded as fixed coefficients. This yields an exact linear programming (LP) reformulation.

\paragraph{Operational insights.}
This LP reformulation substantially improves computational tractability, reducing solution times from hours to less than a second in our empirical analysis, and it delivers three main insights. First, withholding behavior can arise even in the absence of market power, because scarce inventory and uncertain future prices rationally increase the value of holding energy. Second, the effect of uncertainty depends on the battery's initial inventory: when energy is scarce, greater uncertainty raises the value of preserving inventory for future high-price hours, whereas with abundant inventory the same uncertainty can instead make the battery more willing to liquidate energy over a broader range of scenarios. Third, risk management reshapes bid curves into layered structures that secure profitable execution in a broad set of scenarios while preserving some exposure to rare but valuable price spikes.

\subsection{Related work}\label{sec:related}
\paragraph{Existing battery modeling.} 
Existing battery bidding models fall into two classes\footnote{See Appendix~\ref{app_existing_work} for mathematical details.}. The first is the deterministic quantity-only formulation, which optimizes hourly charging and discharging quantities against a known, or perfectly forecasted, price path \cite{pozo2023convex}. This model does not optimize the bid prices at which the battery is willing to transact and therefore is not well suited to our research question. The second class extends this quantity-only formulation to stochastic settings via dynamic programming \cite{xu2020operational}. In these models, the value function of stored energy plays a central role: its derivative gives the marginal opportunity value of stored energy and thereby induces threshold-type bid prices. The follow-up paper \cite{qin2024economic} is the closest to our work in spirit, because it also links price uncertainty to withholding behavior in battery bids and shows that, in some settings, uncertainty can rationally increase the marginal opportunity value of stored energy and hence raise sell bid prices---a mechanism that is captured and generalized by our second operational insight. However, this line of work still derives bid prices indirectly from marginal value functions rather than optimizing stepwise bid curves directly. Our model, by contrast, optimizes the bid curve itself under uncertainty and explicitly incorporates risk management. In contrast to \cite{qin2024economic}, we show that uncertainty can rationally decrease sell bid prices when inventory is abundant and increase them when inventory is scarce. Our simulations also capture realistic intraday cycle of midday charging and evening discharging, under which, even without risk management, uncertainty can likewise rationally decrease sell bid prices.

\paragraph{Three most relevant works.}
Our formulation is mainly informed by three related works. First, \cite{mones2023general} develops a stochastic optimization framework for \emph{convergence bidding} that jointly optimizes bid prices and quantities and shows how a scenario-based formulation can remain computationally tractable. Although convergence bidding in financial markets differs from a battery's physical market participation, that paper is conceptually close to ours in treating the bid curve itself, rather than only dispatched quantities, as the object of optimization. In particular, we draw from \cite{mones2023general} the idea that optimal bid prices can be selected from finitely many sampled prices without loss of optimality. Second, \cite{anunrojwong2024battery} studies batteries in a two-settlement market with market power and shows how decentralized, profit-maximizing batteries can distort grid outcomes through quantity withholding, delayed participation from the DAM to the RTM, and reduced RTM responsiveness. That work complements ours: it focuses on strategic price-making behavior and welfare distortions, whereas we study how withholding behavior can arise even for a price-taking battery managing risk under uncertainty. Moreover, \cite{anunrojwong2024battery} acknowledges that its model assumes market fundamentals are probabilistically known, argues that battery behavior is also partly shaped by uncertainty and robustness considerations, and highlights the distinction between strategic behavior and legitimate operating procedures as an important direction for future research. Third, \cite{karaduman2021economics} develops a dynamic stylized equilibrium framework for grid-scale battery in wholesale electricity markets and verifies battery's price impact, equilibrium effects, and the divergence between private and social incentives. That modeling and analysis approach motivates our broader interest in empirically interpreting battery bidding through a stylized model, even though our paper focuses on asset-level bid formation rather than equilibrium market outcomes.

\section{Battery Modeling}\label{sec:model}
Our goal is to understand how much of this behavior can be explained by legitimate behaviors under price uncertainty, as opposed to exercise of market power. We develop a stylized model of  battery energy arbitrage in wholesale electricity markets. We treat the battery as a \emph{price taker}: it takes electricity prices as exogenous and chooses a bidding curve to maximize risk-adjusted expected profit subject to its physical operating constraints. We formalize the model in the DAM context, but the model can be easily used in the RTM.\footnote{In the RTM, the stylized model can be applied directly as a receding-horizon control policy: at each decision time, the operator updates price scenarios, solves the bidding problem over a forward horizon, and implements the first-period decision. Applying the model to DAM participation more literally would require additional co-optimization with ancillary service markets and resource adequacy obligations. We abstract from these features to isolate how price uncertainty and risk management shape battery behavior in wholesale electricity markets.}

\subsection{Conceptual MILP formulation}\label{sec:milp}

We now define the battery's bidding problem formally.

\paragraph{Notation.} $[T] = \{1,\ldots,T\}$.
$\mathbb{I}\{\mathrm{A}\} = 1$ if event $\mathrm{A}$ happens and 0 otherwise.

\paragraph{Stepwise bidding curves.}
For each hour \(t\in[T]\), the battery submits a stepwise \emph{sell} (discharge) and \emph{buy} (charge) curve for the coming next day. Each step \(n\in[N]\) in hour \(t\) is described by a quantity--price pair:
\[
\begin{array}{llll}
\mbox{sell} & x^{\mathrm{sell}}_{t,n} & \mbox{MWh if price is at least} & p^{\mathrm{sell}}_{t,n}, \\
\mbox{buy} & x^{\mathrm{buy}}_{t,n} & \mbox{MWh if price is at most} & p^{\mathrm{buy}}_{t,n}.
\end{array}
\]
Thus \(x^{\mathrm{sell}}_{t,n}\) is the quantity the battery is willing to discharge if the price is higher than \(p^{\mathrm{sell}}_{t,n}\), and \(x^{\mathrm{buy}}_{t,n}\) is the quantity it is willing to charge if the price is lower than \(p^{\mathrm{buy}}_{t,n}\). The battery cannot charge or discharge more than \(\overline{x}\) MWh within a single hour, so
\begin{equation}
\tag{\mbox{power limit}}
\label{eq:power-limit}
\sum_{n\in[N]} x^{\mathrm{sell}}_{t,n}\le \overline{x},
\qquad
\sum_{n\in[N]} x^{\mathrm{buy}}_{t,n}\le \overline{x},
\qquad t\in[T].
\end{equation}
The battery also cannot simultaneously charge and discharge in the same hour. A sufficient way to enforce this is to impose a deadband between the buy and sell bid prices:
\begin{equation}
\label{eq:deadband}
\tag{\mbox{sell over buy}}
p^{\mathrm{sell}}_{t,n}\ge p^{\mathrm{buy}}_{t,m},
\qquad
t\in[T],\; n,m\in[N].
\end{equation}
Under \eqref{eq:deadband}, a realized price cannot simultaneously clear both a buy step and a sell step in the same hour.
For convenience, we write
\[
\mathbf{x}^{\mathrm{sell}}_t=(x^{\mathrm{sell}}_{t,1},\dots,x^{\mathrm{sell}}_{t,N}),
\qquad
\mathbf{x}^{\mathrm{buy}}_t=(x^{\mathrm{buy}}_{t,1},\dots,x^{\mathrm{buy}}_{t,N}),
\]
\[
\mathbf{p}^{\mathrm{sell}}_t=(p^{\mathrm{sell}}_{t,1},\dots,p^{\mathrm{sell}}_{t,N}),
\qquad
\mathbf{p}^{\mathrm{buy}}_t=(p^{\mathrm{buy}}_{t,1},\dots,p^{\mathrm{buy}}_{t,N}).
\]

\paragraph{Price scenarios.}
Uncertainty is represented by a finite scenario set \(\Omega\), with scenario weights \(\pi_\omega>0\) satisfying \(\sum_{\omega\in\Omega}\pi_\omega=1\). Each scenario \(\omega\in\Omega\) specifies a full path of DAM prices \(\lambda^{\mathrm{DA}}_{t,\omega}\) over all hours \(t\in[T]\).

\paragraph{Market clearing.}
The battery submits its bids before prices are realized. After scenario \(\omega\) is realized, a sell bid clears if the market price \(\lambda^{\mathrm{DA}}_{t,\omega}\) exceeds its sell bid price, and a buy bid clears if the market price \(\lambda^{\mathrm{DA}}_{t,\omega}\) falls below its buy bid price. Formally, in scenario \(\omega\),
\[
\mathbb{I}\{\lambda^{\mathrm{DA}}_{t,\omega}\ge p^{\mathrm{sell}}_{t,n}\}\,x^{\mathrm{sell}}_{t,n}
\]
is the cleared discharge quantity of sell step \(n\), while
\[
\mathbb{I}\{\lambda^{\mathrm{DA}}_{t,\omega}\le p^{\mathrm{buy}}_{t,n}\}\,x^{\mathrm{buy}}_{t,n}
\]
is the cleared charge quantity of buy step \(n\).

\paragraph{Expected state-of-charge dynamics.}
The battery cannot charge when it is full or discharge when it is empty. Because prices are uncertain at bid submission time, actual charging and discharging quantities are scenario-dependent. To keep the model tractable while preserving the intertemporal logic of bidding, we impose the battery's inventory constraint through \emph{expected} state-of-charge (SoC) dynamics. Let \(s_t\) denote the expected SoC at the end of hour \(t\), with given initial SoC \(s_0\). Let \(\eta\in(0,1]\) denote the one-way efficiency, and let \(\underline{s}\le s_t\le \overline{s}\) be the storage bounds. The expected SoC evolves as
\begin{align}
s_t
= s_{t-1}
+ \sum_{\omega\in\Omega}\pi_{\omega}\Big(
\eta\sum_{n\in[N]}\mathbb{I}\{\lambda^{\mathrm{DA}}_{t,\omega}\le p^{\mathrm{buy}}_{t,n}\}\,x^{\mathrm{buy}}_{t,n}
-\frac{1}{\eta}\sum_{n\in[N]}\mathbb{I}\{\lambda^{\mathrm{DA}}_{t,\omega}\ge p^{\mathrm{sell}}_{t,n}\}\,x^{\mathrm{sell}}_{t,n}
\Big),
\label{eq:soc_indicator}
\tag{\mbox{soc}}
\end{align}
for each \(t\in[T]\).

\paragraph{Scenario revenue and downside-risk term.}
Given scenario \(\omega\in\Omega\), the battery's revenue equals revenue from cleared sales minus payments for cleared purchases:
\begin{align}
r_\omega
=\sum_{t\in[T]}\lambda^{\mathrm{DA}}_{t,\omega}\Big(
\sum_{n\in[N]}\mathbb{I}\{\lambda^{\mathrm{DA}}_{t,\omega}\ge p^{\mathrm{sell}}_{t,n}\}\,x^{\mathrm{sell}}_{t,n}
-
\sum_{n\in[N]}\mathbb{I}\{\lambda^{\mathrm{DA}}_{t,\omega}\le p^{\mathrm{buy}}_{t,n}\}\,x^{\mathrm{buy}}_{t,n}
\Big).
\label{eq:profit_indicator}
\tag{\mbox{revenue}}
\end{align}

To model risk management, we penalize the conditional value-at-risk (CVaR) of losses\footnote{We call $(-r_\omega)_+$ a loss.} at confidence level \(\alpha\in(0,1)\). Using the standard Rockafellar--Uryasev representation, the CVaR of losses is
\begin{align}
\tau+\frac{1}{1-\alpha}\sum_{\omega\in\Omega}\pi_\omega z_\omega,
\label{eq:cvar}
\tag{\mbox{cvar}}
\end{align}
where \(\tau\in\mathbb{R}\) and \(z_\omega\ge 0\) are CVaR auxiliary variables that satisfy
\begin{align}
z_\omega \ge -r_\omega-\tau,\qquad \omega\in\Omega,
\qquad
z_\omega\ge 0,\qquad \omega\in\Omega.
\label{eq:cvar_auxiliary}
\tag{\mbox{cvar auxiliary}}
\end{align}
We combine expected revenue and downside risk through a balanced objective with weight \(\theta\in[0,1]\). Larger \(\theta\) places more emphasis on expected revenue, while smaller \(\theta\) places more emphasis on limiting downside tail risk.

\paragraph{Conceptual MILP.}
Collecting the above ingredients, we obtain the battery bidding problem:
{\footnotesize
\begin{equation}
\label{eq:milp}
\tag{\mbox{conceptual MILP}}
\begin{array}{lll}
\maximize
& \theta\sum_{\omega\in\Omega}\pi_\omega r_\omega
-(1-\theta)\left(\tau+\frac{1}{1-\alpha}\sum_{\omega\in\Omega}\pi_\omega z_\omega\right) & \\
\textbf{subject to} && \\
\eqref{eq:profit_indicator}
& r_\omega
=\sum_{t\in[T]}\lambda^{\mathrm{DA}}_{t,\omega}\bigg(
\sum_{n\in[N]}\mathbb{I}\{\lambda^{\mathrm{DA}}_{t,\omega}\ge p^{\mathrm{sell}}_{t,n}\}x^{\mathrm{sell}}_{t,n}
-
\sum_{n\in[N]}\mathbb{I}\{\lambda^{\mathrm{DA}}_{t,\omega}\le p^{\mathrm{buy}}_{t,n}\}x^{\mathrm{buy}}_{t,n}
\bigg),
& \omega\in\Omega, \nonumber\\
\eqref{eq:cvar_auxiliary}
& z_\omega \ge -r_\omega-\tau, & \omega\in\Omega, \nonumber\\
& z_\omega\ge 0, & \omega\in\Omega, \nonumber\\
\eqref{eq:soc_indicator}
& s_t=s_{t-1}+\sum_{\omega\in\Omega}\pi_{\omega}\bigg(
\eta\sum_{n\in[N]}\mathbb{I}\{\lambda^{\mathrm{DA}}_{t,\omega}\le p^{\mathrm{buy}}_{t,n}\}x^{\mathrm{buy}}_{t,n}
-\frac{1}{\eta}\sum_{n\in[N]}\mathbb{I}\{\lambda^{\mathrm{DA}}_{t,\omega}\ge p^{\mathrm{sell}}_{t,n}\}x^{\mathrm{sell}}_{t,n}
\bigg), & t\in[T], \nonumber\\
& \underline{s}\le s_t\le \overline{s}, & t\in[T], \nonumber\\
\eqref{eq:power-limit}
& 0\le \sum_{n\in[N]}x^{\mathrm{sell}}_{t,n}\le \overline{x}, & t\in[T], \nonumber\\
& 0\le \sum_{n\in[N]}x^{\mathrm{buy}}_{t,n}\le \overline{x}, & t\in[T], \nonumber\\
\eqref{eq:deadband}
& p^{\mathrm{sell}}_{t,n}\ge p^{\mathrm{buy}}_{t,m}, & t\in[T], \nonumber\\
&& n,m\in[N], \nonumber\\
\textbf{variables} &&\\
\mbox{bid quantities} & (\mathbf{x}^{\mathrm{sell}}_1,\mathbf{x}^{\mathrm{buy}}_1),\dots,(\mathbf{x}^{\mathrm{sell}}_T,\mathbf{x}^{\mathrm{buy}}_T), \\
\mbox{bid prices} &
(\mathbf{p}^{\mathrm{sell}}_1,\mathbf{p}^{\mathrm{buy}}_1),\dots,(\mathbf{p}^{\mathrm{sell}}_T,\mathbf{p}^{\mathrm{buy}}_T), \\
\mbox{SoC dynamics} & s_1,\dots,s_T, \\
\mbox{CVaR variables} & \tau,z_1,\dots,z_{|\Omega|}.
\end{array}
\end{equation}
}

As written, \eqref{eq:milp} is best viewed as a conceptual formulation. The indicator functions \(\mathbb{I}(\cdot)\) hide the binary clearing logic: a standard MILP implementation would introduce binary variables to represent whether each buy or sell step clears in each scenario, based on comparisons between the realized price \(\lambda^{\mathrm{DA}}_{t,\omega}\) and the submitted bid prices \(p^{\mathrm{sell}}_{t,n}\) and \(p^{\mathrm{buy}}_{t,n}\). In the next subsection, we exploit the structure of finite sampled scenarios to eliminate this binary logic exactly and reformulate the model as a LP.

\subsection{Exact LP reformulation}\label{sec:lp}

In this subsection, we show that the clearing indicators can be precomputed, yielding an exact LP reformulation. The key observation is that, once \(x^{\mathrm{sell}}_{t,n}\) or \(x^{\mathrm{buy}}_{t,n}\) is fixed and uncertainty is represented by finitely many sampled prices, the clearing terms
\[
\sum_{n\in[N]}\mathbb{I}\{\lambda^{\mathrm{DA}}_{t,\omega}\ge p^{\mathrm{sell}}_{t,n}\}\,x^{\mathrm{sell}}_{t,n},
\qquad
\sum_{n\in[N]}\mathbb{I}\{\lambda^{\mathrm{DA}}_{t,\omega}\le p^{\mathrm{buy}}_{t,n}\}\,x^{\mathrm{buy}}_{t,n}
\]
are piecewise-constant functions of the bid prices. Therefore, bid prices can be restricted to the sampled prices without changing any clearing outcomes, and the resulting clearing patterns can be embedded as fixed precomputed matrices.

\paragraph{Restricting bid prices to the sampled prices.}
Fix an hour \(t\), and let
\[
\Lambda_t \coloneqq \{\lambda^{\mathrm{DA}}_{t,\omega}:\omega\in\Omega\}
=
\{\lambda^{\mathrm{DA}}_{t,(1)}<\cdots<\lambda^{\mathrm{DA}}_{t,(|\Lambda_t|)}\}
\]
denote the set of distinct sampled prices at hour \(t\), written in increasing order. The following lemma shows that, under finite sampled scenarios, there is no loss in restricting every bid price to \(\Lambda_t\). This finding allows us to fix bid prices and optimize only over bid quantities.

\begin{lemma}\label{prop:grid}
There exists an optimal solution to \eqref{eq:milp} in which
\[
p^{\mathrm{sell}}_{t,n}\in\Lambda_t,
\qquad
p^{\mathrm{buy}}_{t,m}\in\Lambda_t,
\qquad
t\in[T],\; n,m\in[N].
\]
\end{lemma}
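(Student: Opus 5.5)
The plan is to start from an arbitrary optimal solution of \eqref{eq:milp} and move each bid price onto the sampled grid $\Lambda_t$ without changing any clearing indicator. If no clearing indicator changes, then every quantity that depends on the bid prices stays the same: the revenues $r_\omega$, the SoC path $s_t$, the CVaR variables $(\tau,z_\omega)$ and the objective value. So the new point is optimal as long as it remains feasible. The prices enter \eqref{eq:milp} only through the indicators $\mathbb{I}\{\lambda^{\mathrm{DA}}_{t,\omega}\ge p^{\mathrm{sell}}_{t,n}\}$ and $\mathbb{I}\{\lambda^{\mathrm{DA}}_{t,\omega}\le p^{\mathrm{buy}}_{t,m}\}$, and through \eqref{eq:deadband}. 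Both depend on $p$ only through where it sits relative to the finitely many points of $\Lambda_t$.

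\textbf{Rounding rule.} Fix $t$. For a sell price, I would round up: set $\tilde p^{\mathrm{sell}}_{t,n}=\min\{\lambda\in\Lambda_t:\lambda\ge p^{\mathrm{sell}}_{t,n}\}$. For every sampled $\lambda$, we have $\lambda\ge p^{\mathrm{sell}}_{t,n}$ if and only if $\lambda\ge\tilde p^{\mathrm{sell}}_{t,n}$, so the sell indicators are unchanged in every scenario. Symmetrically, I would round buy prices down: set $\tilde p^{\mathrm{buy}}_{t,m}=\max\{\lambda\in\Lambda_t:\lambda\le p^{\mathrm{buy}}_{t,m}\}$, which leaves the buy indicators unchanged.

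\textbf{Deadband.} Rounding sell prices up and buy prices down only widens the gap, so $\tilde p^{\mathrm{sell}}_{t,n}\ge p^{\mathrm{sell}}_{t,n}\ge p^{\mathrm{buy}}_{t,m}\ge\tilde p^{\mathrm{buy}}_{t,m}$ and \eqref{eq:deadband} is preserved. The \eqref{eq:power-limit} constraints and SoC bounds are unaffected because quantities and cleared amounts are unchanged.

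\textbf{Main obstacle: empty sets.} The delicate step is when the rounding set is empty. For a sell step this happens when $p^{\mathrm{sell}}_{t,n}>\max\Lambda_t$, so the step never clears. For a buy step it happens when $p^{\mathrm{buy}}_{t,m}<\min\Lambda_t$, so the step never clears either.
\begin{itemize}
\item For such a sell step, I would set $x^{\mathrm{sell}}_{t,n}=0$ and $\tilde p^{\mathrm{sell}}_{t,n}=\max\Lambda_t$. The step contributed nothing in any scenario before, and with zero quantity it contributes nothing now. The power limit only becomes slacker.
\item For such a buy step, I would set $x^{\mathrm{buy}}_{t,m}=0$ and $\tilde p^{\mathrm{buy}}_{t,m}=\min\Lambda_t$.
\end{itemize}
The deadband must then be rechecked for the mixed cases. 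A sell price equal to $\max\Lambda_t$ dominates every rounded buy price, since all of these lie in $\Lambda_t$. A buy price equal to $\min\Lambda_t$ is dominated by every rounded sell price. So \eqref{eq:deadband} still holds, and every cleared quantity is identical to before.

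\textbf{Existence of an optimal solution.} The argument above assumes an optimal solution exists. If that is not taken as given, I would argue it as follows. By the same rounding, the supremum of \eqref{eq:milp} is attained over the finitely many grid price configurations, so it suffices to solve each of these. For a fixed configuration the indicators are constants, and \eqref{eq:milp} becomes an LP in $(x,s,\tau,z)$. The bid quantities lie in a compact set, and for fixed $r_\omega$ the optimal CVaR term is finite, attained at $\tau$ equal to the $\alpha$-quantile of $-r_\omega$. Hence each feasible LP attains its optimum. Taking the best of these finitely many LPs yields an optimal solution with $p\in\Lambda_t$, which is the claim of Lemma~\ref{prop:grid}.
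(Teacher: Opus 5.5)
Your proof is correct and rests on the same idea as the paper's: move each bid price onto $\Lambda_t$ without altering any clearing indicator. You carry it out more carefully, though: the paper moves a price to ``either endpoint'' of the gap between consecutive sampled prices, which actually flips the indicator at the lower endpoint for sell bids (and at the upper endpoint for buy bids) because the clearing inequalities are non-strict, and it also overlooks that \eqref{eq:deadband} depends on prices directly, whereas your rule of rounding sell prices up and buy prices down avoids both problems. You also handle prices outside $[\min\Lambda_t,\max\Lambda_t]$ and the existence of an optimum, which the paper skips; note only that zeroing a never-cleared step and your compactness claim tacitly use $x^{\mathrm{sell}}_{t,n},x^{\mathrm{buy}}_{t,n}\ge 0$ for each individual step, which \eqref{eq:lp} imposes but \eqref{eq:milp} as written states only for the sums over $n$.
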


\noindent\emph{Proof.}
Fix \(t\) and consider any interval \((\lambda^{\mathrm{DA}}_{t,(k)},\lambda^{\mathrm{DA}}_{t,(k+1)})\) between two consecutive sampled prices. There is no scenario price in this interval, so for every scenario \(\omega\), both indicators
\[
\mathbb{I}\{\lambda^{\mathrm{DA}}_{t,\omega}\ge p\},
\qquad
\mathbb{I}\{\lambda^{\mathrm{DA}}_{t,\omega}\le p\}
\]
remain constant as \(p\) varies within the interval. Hence moving any bid price \(p\) to either endpoint in \(\Lambda_t\) leaves all clearing outcomes unchanged. Since both the objective and all constraints depend on bid prices only through these clearing indicators, such a move leaves feasibility and objective value unchanged. Repeating this argument for every \(t\) and \(n\) yields the claim. \qed

\paragraph{Precomputing clearing patterns.}
For each hour \(t\), choose buy and sell bid price candidates from the sampled prices:
\begin{align*}
\hat{\mathcal P}^{\mathrm{buy}}_t
&=
\{\hat p^{\mathrm{buy}}_{t,(1)}<\cdots<\hat p^{\mathrm{buy}}_{t,(N_t^{\mathrm{buy}})}\}\subseteq \Lambda_t,\\
\hat{\mathcal P}^{\mathrm{sell}}_t
&=
\{\hat p^{\mathrm{sell}}_{t,(1)}<\cdots<\hat p^{\mathrm{sell}}_{t,(N_t^{\mathrm{sell}})}\}\subseteq \Lambda_t,
\end{align*}
with
\[
\hat p^{\mathrm{buy}}_{t,(N_t^{\mathrm{buy}})}<\hat p^{\mathrm{sell}}_{t,(1)}
\]
to preserve the deadband constraint \eqref{eq:deadband}. We interpret
\(x^{\mathrm{buy}}_{t,n}\) and \(x^{\mathrm{sell}}_{t,n}\) as the quantities submitted at these fixed bid (sampled) prices. For each hour \(t\) and scenario \(\omega\), define the corresponding clearing indicator vectors
\begin{align*}
\mathbf b^{\mathrm{buy}}_{t,\omega}
&=
\big(b^{\mathrm{buy}}_{t,\omega,1},\dots,b^{\mathrm{buy}}_{t,\omega,N_t^{\mathrm{buy}}}\big)\in\{0,1\}^{N^{\mathrm{buy}}},
\qquad
b^{\mathrm{buy}}_{t,\omega,n}
=
\mathbb{I}\{\lambda^{\mathrm{DA}}_{t,\omega}\le \hat p^{\mathrm{buy}}_{t,(n)}\},\\
\mathbf b^{\mathrm{sell}}_{t,\omega}
&=
\big(b^{\mathrm{sell}}_{t,\omega,1},\dots,b^{\mathrm{sell}}_{t,\omega,N_t^{\mathrm{sell}}}\big)\in\{0,1\}^{N^{\mathrm{sell}}},
\qquad
b^{\mathrm{sell}}_{t,\omega,n}
=
\mathbb{I}\{\lambda^{\mathrm{DA}}_{t,\omega}\ge \hat p^{\mathrm{sell}}_{t,(n)}\}.
\end{align*}
These vectors are fixed data once the price scenarios have been sampled. With this notation, scenario revenue becomes
\begin{align*}
r_\omega
=
\sum_{t\in[T]}\lambda^{\mathrm{DA}}_{t,\omega}\Big(
\mathbf b^{\mathrm{sell}}_{t,\omega}\cdot \mathbf x^{\mathrm{sell}}_t
-
\mathbf b^{\mathrm{buy}}_{t,\omega}\cdot \mathbf x^{\mathrm{buy}}_t
\Big),
\qquad \omega\in\Omega,
\end{align*}
and the expected SoC dynamics become
\begin{align*}
s_t
=
s_{t-1}
+\sum_{\omega\in\Omega}\pi_\omega\Big(
\eta\,\mathbf b^{\mathrm{buy}}_{t,\omega}\cdot \mathbf x^{\mathrm{buy}}_t
-\frac{1}{\eta}\,\mathbf b^{\mathrm{sell}}_{t,\omega}\cdot \mathbf x^{\mathrm{sell}}_t
\Big),
\qquad t\in[T].
\end{align*}
Hence all indicator logic is embedded into fixed coefficient matrices.

It is useful to distinguish the candidate sizes
\(N^{\mathrm{buy}}\) and \(N^{\mathrm{sell}}\)
from the market-imposed limits on the number of bid segments. The former are modeling choices used to discretize the sampled price support and construct the exact LP reformulation for the sampled problem; the latter are operational constraints of the market platform. In particular, the LP may be solved on a finer candidate price set than the number of segments that will ultimately be submitted. In our simulations, the number of optimized bids with strictly positive quantity rarely exceeds 10. Thus, in practice, the optimized bid curve can usually be submitted directly. When a tighter segment cap is needed, the optimized bids can be compressed ex post by merging nearby.

\paragraph{Exact LP reformulation.}
We can therefore rewrite \eqref{eq:milp} as the following LP:
{\footnotesize
\begin{equation}
\label{eq:lp}
\tag{\mbox{exact LP}}
\begin{array}{llll}
\maximize
& \theta\sum_{\omega\in\Omega}\pi_\omega r_\omega
-\,(1-\theta)\left(\tau+\frac{1}{1-\alpha}\sum_{\omega\in\Omega}\pi_\omega z_\omega\right)
&& \\
\textbf{subject to} & \text{primal constraints} && \text{(dual variables)} \\
\eqref{eq:profit_indicator}
& r_\omega
=
\sum_{t\in[T]}\lambda^{\mathrm{DA}}_{t,\omega}\Big(
\mathbf b^{\mathrm{sell}}_{t,\omega}\cdot \mathbf x^{\mathrm{sell}}_t
-
\mathbf b^{\mathrm{buy}}_{t,\omega}\cdot \mathbf x^{\mathrm{buy}}_t
\Big),
& \omega\in\Omega, & (\kappa_\omega\in\mathbb R), \nonumber\\
\eqref{eq:cvar_auxiliary}
& z_\omega \ge -r_\omega-\tau,
& \omega\in\Omega, & (\gamma_\omega\ge 0), \nonumber\\
& z_\omega \ge 0,
& \omega\in\Omega, & (\delta_\omega\ge 0), \nonumber\\
\eqref{eq:soc_indicator}
& s_t
=
s_{t-1}
+\sum_{\omega\in\Omega}\pi_\omega\Big(
\eta\,\mathbf b^{\mathrm{buy}}_{t,\omega}\cdot \mathbf x^{\mathrm{buy}}_t
-\frac{1}{\eta}\,\mathbf b^{\mathrm{sell}}_{t,\omega}\cdot \mathbf x^{\mathrm{sell}}_t
\Big),
& t\in[T], & (\lambda^{\mathrm{opp}}_t\in\mathbb R), \nonumber\\
& \underline{s}\le s_t\le \overline{s},
& t\in[T], & (\underline{\mu}_t\ge 0,\ \overline{\mu}_t\ge 0), \nonumber\\
\eqref{eq:power-limit}
& \sum_{n\in[N]}x^{\mathrm{sell}}_{t,n}\le \overline{x},
& t\in[T], & (\overline{\nu}^{\mathrm{sell}}_t\ge 0), \nonumber\\
& \sum_{n\in[N]}x^{\mathrm{buy}}_{t,n}\le \overline{x},
& t\in[T], & (\overline{\nu}^{\mathrm{buy}}_t\ge 0), \nonumber\\
& x^{\mathrm{sell}}_{t,n}\ge 0,\quad x^{\mathrm{buy}}_{t,n}\ge 0,
& t\in[T],\; n\in[N], & (\underline{\nu}^{\mathrm{sell}}_{t,n}\ge 0,\ \underline{\nu}^{\mathrm{buy}}_{t,n}\ge 0),\\
\textbf{variables} &&\\
\mbox{bid quantities} & (\mathbf x^{\mathrm{sell}}_1,\mathbf x^{\mathrm{buy}}_1),\dots,(\mathbf x^{\mathrm{sell}}_T,\mathbf x^{\mathrm{buy}}_T), \\
\mbox{SoC dynamics} & s_1,\dots,s_T, \\
\mbox{CVaR variables} & \tau,z_1,\dots,z_{|\Omega|}. 
\end{array}
\end{equation}
}

Note that the sign \(\ge\) of dual variables corresponds the LP formulation where the inequality constraints are \(\le\). \eqref{eq:lp} is exact for the sampled problem, having the same feasible set and optimal value as \eqref{eq:milp}. By Lemma~\ref{prop:grid}, bid prices may be restricted to the sampled prices without loss of optimality. And once bid prices are restricted, every clearing decision is completely determined by the sampled scenarios and can be precomputed.

This reformulation has two important implications. First, it substantially improves computational tractability. With 200 sampled scenarios and a 24-hour horizon, solving \eqref{eq:milp} takes hours in Gurobi, whereas \eqref{eq:lp} solves in seconds. Second, \eqref{eq:lp} exposes economically meaningful dual variables. In particular, the dual variable \(\lambda_t^{\mathrm{opp}}\) on SoC dynamics constraint provides an endogenous measure of the battery's intertemporal opportunity cost. 

\subsection{More practical LP reformulation}\label{sec:practical_lp}

In practice, battery bids are often naturally one-sided at the hourly level. For example, under the CAISO diurnal pattern, there is little reason to submit sell bids during midday solar-trough hours, or buy bids during early evening peaks. The battery's first operational decision is usually whether a given hour is best used to charge, discharge, or remain idle. Once this hourly mode is fixed, the remaining optimization problem is simply to choose the bid prices and quantities in that mode; the market-clearing rule then determines actual dispatch conditional on the realized price. Accordingly, we assign each hour \(t\in[T]\) to one of three modes:
\[
\text{charge},\qquad \text{discharge},\qquad \text{idle}.
\]
Battery operators can tell when low-price charging and high-price discharging hours are likely to occur, from historical prices.

\paragraph{Precomputing clearing patterns.}
Let
\[
a_t^{\mathrm{buy}},\,a_t^{\mathrm{sell}}\in\{0,1\},
\qquad
a_t^{\mathrm{buy}}+a_t^{\mathrm{sell}}\le 1,
\]
where \(a_t^{\mathrm{buy}}=1\) indicates a charge hour, \(a_t^{\mathrm{sell}}=1\) indicates a discharge hour, and \(a_t^{\mathrm{buy}}=a_t^{\mathrm{sell}}=0\) indicates an idle hour. Under this assignment, each hour has only one relevant bid side. Fix hour \(t\), and choose a single bid price set
\[
\hat{\mathcal P}_t=\{\hat p_{t,(1)}<\hat p_{t,(2)}<\cdots<\hat p_{t,(N)}\}\subseteq \Lambda_t.
\]
Let \(\mathbf x_t=(x_{t,1},\dots,x_{t,N})\) denote the corresponding bid quantities. If hour \(t\) is a discharge hour, then \(x_{t,n}\) is the quantity offered to sell at price \(\hat p_{t,(n)}\); if hour \(t\) is a charge hour, then \(x_{t,n}\) is the quantity bid to buy at price \(\hat p_{t,(n)}\); if hour \(t\) is idle, then \(\mathbf x_t=\mathbf 0\).

For each hour \(t\) and scenario \(\omega\), define the precomputed revenue-clearing vector
\[
\mathbf b_{t,\omega}
=
\big(b_{t,\omega,1},\dots,b_{t,\omega,N}\big)\in\mathbb R^N,
\]
with entries
\[
b_{t,\omega,n}
=
a_t^{\mathrm{sell}}\mathbb{I}\{\lambda^{\mathrm{DA}}_{t,\omega}\ge \hat p_{t,(n)}\}
-
a_t^{\mathrm{buy}}\mathbb{I}\{\lambda^{\mathrm{DA}}_{t,\omega}\le \hat p_{t,(n)}\}.
\]
Thus \(\mathbf b_{t,\omega}\) encodes both the hour mode and the clearing rule. In a discharge hour it is a sell-clearing indicator vector, in a charge hour it is minus a buy-clearing indicator vector, and in an idle hour it is identically zero. Similarly, define the SoC-transition vector
\[
\tilde{\mathbf b}_{t,\omega}
=
\big(\tilde b_{t,\omega,1},\dots,\tilde b_{t,\omega,N}\big)\in\mathbb R^N,
\]
with entries
\[
\tilde b_{t,\omega,n}
=
\eta\,a_t^{\mathrm{buy}}\mathbb{I}\{\lambda^{\mathrm{DA}}_{t,\omega}\le \hat p_{t,(n)}\}
-
\frac{1}{\eta}\,a_t^{\mathrm{sell}}\mathbb{I}\{\lambda^{\mathrm{DA}}_{t,\omega}\ge \hat p_{t,(n)}\}.
\]
These vectors are fixed data once the scenario set has been sampled.
With this notation, scenario revenue becomes
\begin{align*}
r_\omega
=
\sum_{t\in[T]}\lambda^{\mathrm{DA}}_{t,\omega}\,
\mathbf b_{t,\omega}\cdot \mathbf x_t,
\qquad \omega\in\Omega,
\end{align*}
and the expected SoC dynamics become
\begin{align*}
s_t
=
s_{t-1}
+\sum_{\omega\in\Omega}\pi_\omega\,
\tilde{\mathbf b}_{t,\omega}\cdot \mathbf x_t,
\qquad t\in[T].
\end{align*}

\paragraph{Practical LP reformulation.}
With the matrices constructed above, \eqref{eq:milp} reduces to the following LP:
{\footnotesize
\begin{equation}
\label{eq:lp2}
\tag{\mbox{practical LP}}
\begin{array}{llll}
\maximize
& \theta\sum_{\omega\in\Omega}\pi_\omega r_\omega
-\,(1-\theta)\left(\tau+\frac{1}{1-\alpha}\sum_{\omega\in\Omega}\pi_\omega z_\omega\right)
&& \\
\textbf{subject to} & \text{primal constraints} && \text{(dual variables)} \\
\eqref{eq:profit_indicator}
& r_\omega
=
\sum_{t\in[T]}\lambda^{\mathrm{DA}}_{t,\omega}\,
\mathbf b_{t,\omega}\cdot \mathbf x_t,
& \omega\in\Omega, & (\kappa_\omega\in\mathbb R), \nonumber\\
\eqref{eq:cvar_auxiliary}
& z_\omega \ge -r_\omega-\tau,
& \omega\in\Omega, & (\gamma_\omega\ge 0), \nonumber\\
& z_\omega \ge 0,
& \omega\in\Omega, & (\delta_\omega\ge 0), \nonumber\\
\eqref{eq:soc_indicator}
& s_t
=
s_{t-1}
+\sum_{\omega\in\Omega}\pi_\omega\,\tilde{\mathbf b}_{t,\omega}\cdot \mathbf x_t,
& t\in[T], & (\lambda^{\mathrm{opp}}_t\in\mathbb R), \nonumber\\
& \underline{s}\le s_t\le \overline{s},
& t\in[T], & (\underline{\mu}_t\ge 0,\ \overline{\mu}_t\ge 0), \nonumber\\
\eqref{eq:power-limit} & \sum_{n\in[N]}x_{t,n}\le \overline{x},
& t\in[T], & (\overline{\nu}_t\ge 0), \nonumber\\
& x_{t,n}\ge 0,
& t\in[T],\; n\in[N], & (\underline{\nu}_{t,n}\ge 0),\\
\textbf{variables} &&\\
\mbox{bid quantities} & \mathbf x_1,\dots,\mathbf x_T, \\
\mbox{SoC dynamics} & s_1,\dots,s_T, \\
\mbox{CVaR variables} & \tau,z_1,\dots,z_{|\Omega|}. 
\end{array}
\end{equation}
}
Relative to \eqref{eq:lp}, \eqref{eq:lp2} removes unnecessary bid variables in hours where the economically relevant action (charge, discharge, idle) is already known, and gets closer to how battery operators bid in practice: first forecast the timing of surplus and scarcity to determine whether an hour should be used for charging, discharging, or idling, then design bid curves within each active hour to handle uncertainty about the level of the realized price.

\section{Theoretical Analysis}

We now turn from formulation to economics. The LP reformulation \eqref{eq:lp} not only makes the bidding problem tractable, but also exposes dual variables with direct economic meaning. Our first result uses the KKT conditions of the LP to characterize when the battery bids its full hourly discharging or charging capability.

\begin{theorem}\label{thm:kkt_bangbang}
Consider a battery operated under \eqref{eq:lp}. Let
\(\lambda^{\mathrm{opp}\star}_t\) denote an optimal dual variable associated with the SoC balance constraint \eqref{eq:soc_indicator} at time \(t\), let \(\gamma^\star_\omega\) denote an optimal dual variable associated with the CVaR auxiliary constraint \eqref{eq:cvar_auxiliary}, and let \(\underline{\mu}_t^\star,\overline{\mu}_t^\star\ge 0\) denote the optimal dual variables associated with the lower and upper SoC bounds \(\underline{s}\le s_t\le \overline{s}\), respectively. Then, for every \(t\in[T]\),
\[
\lambda_t^{\mathrm{opp}\star}
=
\sum_{\tau=t}^T
\bigl(\overline{\mu}_\tau^\star-\underline{\mu}_\tau^\star\bigr).
\]
Moreover, for each hour \(t\in[T]\), the battery bids its full hourly discharging capability, i.e.,
\(\sum_{n\in[N]} x^{\mathrm{sell}}_{t,n}{}^\star=\overline{x}\), whenever
\[
\frac{1}{\eta}\lambda^{\mathrm{opp}}_t{}^\star
<
\max_{n\in[N]}
\left\{
\frac{\sum_{\omega\in\Omega}(\theta\pi_\omega + \gamma^\star_\omega)\,
b^{\mathrm{sell}}_{t,\omega,n}\lambda^{\mathrm{DA}}_{t,\omega}}
{\sum_{\omega\in\Omega}\pi_\omega b^{\mathrm{sell}}_{t,\omega,n}}
\right\}.
\]
Similarly, the battery bids its full hourly charging capability, i.e.,
\(\sum_{n\in[N]} x^{\mathrm{buy}}_{t,n}{}^\star=\overline{x}\), whenever
\[
\eta\lambda^{\mathrm{opp}}_t{}^\star
>
\min_{n\in[N]}
\left\{
\frac{\sum_{\omega\in\Omega}(\theta\pi_\omega + \gamma^\star_\omega)\,
b^{\mathrm{buy}}_{t,\omega,n}\lambda^{\mathrm{DA}}_{t,\omega}}
{\sum_{\omega\in\Omega}\pi_\omega b^{\mathrm{buy}}_{t,\omega,n}}
\right\}.
\]
Finally, the multipliers \(\gamma^\star_\omega\) satisfy
\[
0\le \gamma^\star_\omega \le \frac{1-\theta}{1-\alpha}\pi_\omega,
\qquad \forall \omega\in\Omega,
\]
and
\[
\sum_{\omega\in\Omega}\gamma^\star_\omega = 1-\theta.
\]
In addition, for every scenario \(\omega\) in the lowest \((1-\alpha)\)-tail of revenue,
\[
\gamma^\star_\omega=\frac{1-\theta}{1-\alpha}\pi_\omega.
\]
\end{theorem}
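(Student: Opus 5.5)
The plan is to write the Lagrangian of \eqref{eq:lp} using the multipliers listed in its right-hand column, and then read off each claim from one KKT condition. Since \eqref{eq:lp} is a feasible, bounded LP, strong duality holds and an optimal primal--dual pair satisfying stationarity and complementary slackness exists. I will fix sign conventions so that each constraint is written as ``\(\le\)'' (or as an equality) and enters the Lagrangian as \(+\)multiplier\(\times\)(slack). Every identity below is then a partial derivative of the Lagrangian set to zero.

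\emph{Step 1 (free and CVaR variables).} Stationarity in \(r_\omega\) links the revenue-definition multiplier to the objective and CVaR terms: \(\kappa^\star_\omega=\theta\pi_\omega+\gamma^\star_\omega\). Stationarity in \(\tau\) gives \(-(1-\theta)+\sum_\omega\gamma^\star_\omega=0\), i.e.\ \(\sum_\omega\gamma^\star_\omega=1-\theta\). Stationarity in \(z_\omega\) gives \(\gamma^\star_\omega+\delta^\star_\omega=\frac{1-\theta}{1-\alpha}\pi_\omega\), and nonnegativity of \(\gamma^\star_\omega,\delta^\star_\omega\) yields the box bounds. For the tail claim I use complementary slackness. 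For scenarios in the lower tail of revenue, i.e.\ \(r^\star_\omega<-\tau^\star\), feasibility forces \(z^\star_\omega>0\), hence \(\delta^\star_\omega=0\) and \(\gamma^\star_\omega\) sits at its upper bound. The delicate point is scenarios with revenue exactly at the VaR level \(-\tau^\star\), where \(\gamma^\star_\omega\) may be fractional. I will state the tail claim for the strict tail, and note that the ties are exactly where the \((1-\alpha)\)-quantile is split.

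\emph{Step 2 (SoC variables).} Each \(s_t\) with \(t<T\) appears in the balance at \(t\) (coefficient \(+1\)), in the balance at \(t+1\) (as \(-s_t\)), and in its two bounds. Stationarity therefore gives
\[
\lambda^{\mathrm{opp}\star}_t-\lambda^{\mathrm{opp}\star}_{t+1}=\overline{\mu}^\star_t-\underline{\mu}^\star_t .
\]
The variable \(s_T\) appears in only one balance, which gives \(\lambda^{\mathrm{opp}\star}_T=\overline{\mu}^\star_T-\underline{\mu}^\star_T\). Telescoping backward from \(T\) produces the stated sum formula. The overall sign of \(\lambda^{\mathrm{opp}}\) depends on whether the balance is written as \(s_t-s_{t-1}-\dots=0\) or its negative. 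I will choose the orientation that matches the statement and keep it fixed throughout.

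\emph{Step 3 (bang-bang).} Stationarity in \(x^{\mathrm{sell}}_{t,n}\), after substituting \(\kappa^\star_\omega\) from Step 1, reads
\[
\sum_\omega(\theta\pi_\omega+\gamma^\star_\omega)b^{\mathrm{sell}}_{t,\omega,n}\lambda^{\mathrm{DA}}_{t,\omega}-\tfrac1\eta\lambda^{\mathrm{opp}\star}_t\sum_\omega\pi_\omega b^{\mathrm{sell}}_{t,\omega,n}=\overline{\nu}^{\mathrm{sell}\star}_t-\underline{\nu}^{\mathrm{sell}\star}_{t,n}\le\overline{\nu}^{\mathrm{sell}\star}_t .
\]
Restrict to steps with \(\sum_\omega\pi_\omega b^{\mathrm{sell}}_{t,\omega,n}>0\); steps that clear in no scenario are irrelevant and make the ratio undefined. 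Dividing by this positive quantity shows that if the hypothesized strict inequality holds for the maximizing \(n\), then \(\overline{\nu}^{\mathrm{sell}\star}_t>0\). Complementary slackness then forces \(\sum_n x^{\mathrm{sell}\star}_{t,n}=\overline{x}\). The buy case is identical: the buy quantities enter revenue with a minus sign and the SoC balance with \(+\eta\), so the inequality flips and the max becomes a min.

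The main obstacle is bookkeeping rather than substance. The sign conventions for \(\kappa\) and \(\lambda^{\mathrm{opp}}\) must be made consistent across Steps 1--3 so that the final inequalities come out exactly as stated. The other point needing care is the precise meaning of ``lowest \((1-\alpha)\)-tail'' when several scenarios share the VaR value.
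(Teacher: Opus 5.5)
Your route is the paper's: form the Lagrangian of \eqref{eq:lp}, take stationarity in $z_\omega,\tau,r_\omega,s_t$ and the bid quantities, then apply complementary slackness on the power limits. Your direct ``$\overline{\nu}^{\mathrm{sell}\star}_t>0$'' argument is just the paper's contrapositive read forwards.

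The gap is the step you defer as bookkeeping: it cannot be carried out. Your Step~2 recursion $\lambda^{\mathrm{opp}\star}_t-\lambda^{\mathrm{opp}\star}_{t+1}=\overline{\mu}^\star_t-\underline{\mu}^\star_t$ comes from the term $+\lambda^{\mathrm{opp}}_t\bigl(s_t-s_{t-1}-\sum_{\omega}\pi_\omega(\eta\,\mathbf b^{\mathrm{buy}}_{t,\omega}\cdot\mathbf x^{\mathrm{buy}}_t-\tfrac{1}{\eta}\mathbf b^{\mathrm{sell}}_{t,\omega}\cdot\mathbf x^{\mathrm{sell}}_t)\bigr)$. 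The derivative of that same term in $x^{\mathrm{sell}}_{t,n}$ is $+\tfrac{1}{\eta}\lambda^{\mathrm{opp}}_t\sum_\omega\pi_\omega b^{\mathrm{sell}}_{t,\omega,n}$, not the $-\tfrac{1}{\eta}\lambda^{\mathrm{opp}}_t\sum_\omega\pi_\omega b^{\mathrm{sell}}_{t,\omega,n}$ in your Step~3 display. Likewise, on the buy side it is $-\eta\lambda^{\mathrm{opp}}_t\sum_\omega\pi_\omega b^{\mathrm{buy}}_{t,\omega,n}$. So with the orientation of Step~2, the criterion you actually obtain is $-\tfrac{1}{\eta}\lambda^{\mathrm{opp}\star}_t<\max_n\{\cdots\}$. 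With the orientation that yields the stated criterion, Step~2 becomes $\lambda^{\mathrm{opp}\star}_t=\sum_{\tau=t}^T(\underline{\mu}^\star_\tau-\overline{\mu}^\star_\tau)$. No single convention delivers both displayed claims. The paper's own proof makes exactly this slip: its Lagrangian uses the first orientation, while its stationarity lines for $x^{\mathrm{sell}}_{t,n}$ and $x^{\mathrm{buy}}_{t,n}$ carry the signs of the second.

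This is a defect of the statement, not of presentation. Take one scenario, $T=2$, $\theta=1$, $\eta=1$, prices $10$ then $100$, $s_0=1$, $\underline s=0$, $\overline s=\overline x=2$, and one sell step per hour at the sampled price (a single scenario leaves no admissible buy candidate). Every optimal solution has $x^{\mathrm{sell}}_1=0$, $x^{\mathrm{sell}}_2=1$, $s_1=1$, $s_2=0$. The SoC-bound multipliers are unique: $\underline{\mu}^\star_2=100$, and all others are zero. If $\lambda^{\mathrm{opp}\star}_1=\sum_{\tau\ge 1}(\overline{\mu}^\star_\tau-\underline{\mu}^\star_\tau)=-100$, as the first claim asserts, then the sell criterion at $t=1$ reads $-100<10$ and holds. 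Yet the battery bids nothing in hour~1.

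So one sign must be corrected before anything can be proved. The version consistent with reading \eqref{eq:emoc_discharge} as an opportunity cost keeps the bang-bang conditions and states $\lambda^{\mathrm{opp}\star}_t=\sum_{\tau=t}^T(\underline{\mu}^\star_\tau-\overline{\mu}^\star_\tau)$. With that fix, your argument goes through.

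The rest of your proposal is sound, and in two places more careful than the paper. First, your tail argument needs only strict tail $\Rightarrow z^\star_\omega>0$, not the identity $z^\star_\omega=\max\{-r^\star_\omega-\tau^\star,0\}$, which can fail when $\theta=1$. Second, positivity of $\sum_\omega\pi_\omega b^{\mathrm{sell}}_{t,\omega,n}$ is automatic, since every candidate price is a sampled price and clearing is inclusive.
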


Theorem~\ref{thm:kkt_bangbang} yields an economic interpretation of battery bidding. For a sell bid \(n\) at hour \(t\), define the set of scenarios in which that step clears:
\[
\Omega^{\mathrm{sell}}_{t,n}
=
\{\omega\in\Omega:\lambda^{\mathrm{DA}}_{t,\omega}\ge \hat p^{\mathrm{sell}}_{t,(n)}\}.
\]
Then the term inside the maximization can be rewritten as
\begin{equation}
\frac{\sum_{\omega\in\Omega}(\theta\pi_\omega+\gamma^\star_\omega)
b^{\mathrm{sell}}_{t,\omega,n}\lambda^{\mathrm{DA}}_{t,\omega}}
{\sum_{\omega\in\Omega}\pi_\omega b^{\mathrm{sell}}_{t,\omega,n}}
=
\frac{\sum_{\omega\in\Omega^{\mathrm{sell}}_{t,n}}(\theta\pi_\omega+\gamma^\star_\omega)\lambda^{\mathrm{DA}}_{t,\omega}}
{\sum_{\omega\in\Omega^{\mathrm{sell}}_{t,n}}\pi_\omega}.
\label{eq:cemv_sell}
\tag{\mbox{CEMV-sell}}
\end{equation}
We refer to this object as the conditional expected marginal value to sell (CEMV-sell). It averages prices over exactly those scenarios in which sell bid \(n\) clears, while reweighting scenarios by the risk-adjusted weights \(\theta\pi_\omega+\gamma^\star_\omega\). The denominator,
\[
\sum_{\omega\in\Omega^{\mathrm{sell}}_{t,n}}\pi_\omega,
\]
is the probability under the original scenario measure that sell bid \(n\) clears. The numerator replaces the baseline weights \(\pi_\omega\) by the distorted weights \(\theta\pi_\omega+\gamma^\star_\omega\), so \(\gamma_\omega^\star\) measures the extra weight placed on adverse tail scenarios by the CVaR term in the objective. In the risk-neutral benchmark, where \(\theta=1\) and hence \(\gamma^\star_\omega=0\), this expression reduces to the usual conditional expectation of DAM price under \(\pi_\omega\):
\[
\frac{\sum_{\omega\in\Omega}\pi_\omega
b^{\mathrm{sell}}_{t,\omega,n}\lambda^{\mathrm{DA}}_{t,\omega}}
{\sum_{\omega\in\Omega}\pi_\omega b^{\mathrm{sell}}_{t,\omega,n}}
=
\mathbb{E}\!\left[
\lambda^{\mathrm{DA}}_{t,\omega}
\;\middle|\;
\lambda^{\mathrm{DA}}_{t,\omega}\ge \hat p^{\mathrm{sell}}_{t,(n)}
\right].
\]
The comparison term on the left-hand side,
\begin{equation}
\frac{1}{\eta}\lambda_t^{\mathrm{opp}\star},
\label{eq:emoc_discharge}
\tag{\mbox{EMOC-discharge}}
\end{equation}
is the battery's expected marginal opportunity cost of discharging (EMOC-discharge), i.e., discharging one more unit at hour \(t\) lowers future state of charge and therefore sacrifices future operating opportunities, where, 
\[
\lambda_t^{\mathrm{opp}\star}
=
\sum_{\tau=t}^T
\bigl(\overline{\mu}_\tau^\star-\underline{\mu}_\tau^\star\bigr)
\]
is the cumulative shadow value of all future storage constraints from hour \(t\) onward.  Theorem~\ref{thm:kkt_bangbang} therefore says that the battery bids its full hourly discharging capability whenever at least one sell bid step offers a \eqref{eq:cemv_sell} above this \eqref{eq:emoc_discharge}.

The charging condition has a similar interpretation. Define the corresponding conditional expected marginal cost to buy (CEMC-buy) for each buy bid as 
\begin{equation}
\frac{\sum_{\omega\in\Omega}(\theta\pi_\omega+\gamma^\star_\omega)
b^{\mathrm{buy}}_{t,\omega,n}\lambda^{\mathrm{DA}}_{t,\omega}}
{\sum_{\omega\in\Omega}\pi_\omega b^{\mathrm{buy}}_{t,\omega,n}}
=
\frac{\sum_{\omega\in\Omega^{\mathrm{buy}}_{t,n}}(\theta\pi_\omega+\gamma^\star_\omega)\lambda^{\mathrm{DA}}_{t,\omega}}
{\sum_{\omega\in\Omega^{\mathrm{buy}}_{t,n}}\pi_\omega},
\label{eq:cemc_buy}
\tag{\mbox{CEMC-buy}}
\end{equation}
and the battery's expected marginal opportunity value of charging (EMOV-charge), i.e., the value of carrying one additional unit of stored energy into future hours, as
\begin{equation}
    \eta\lambda_t^{\mathrm{opp}\star}.
    \label{eq:emov_charge}
\tag{\mbox{EMOV-charge}}
\end{equation}
Theorem~\ref{thm:kkt_bangbang} says that the battery bids its full hourly charging capability whenever this \eqref{eq:emov_charge} exceeds the lowest \eqref{eq:cemc_buy} among its buy bids.

\section{Empirical Analysis}

We now bring the theory to data. We study three stylized simulations. The first studies the economics of evening discharging when stored energy is scarce. The second studies the more realistic battery operations of midday charging and evening discharging. The third studies the second simulation under risk management. 

\paragraph{CAISO data and scenario generation.}
We use historical DAM prices at node \texttt{SLATE\_7\_N004} in CAISO over the full 2024 calendar year. Let
\[
\boldsymbol{\lambda}_d=(\lambda_{d,1},\dots,\lambda_{d,24})\in\mathbb{R}^{24}
\]
denote the 24-hour DAM price vector on day \(d\). Across the 366 days in 2024, the hourly standard deviation is reported in Table~\ref{tab:hourly_std_three_rows}.

\begin{table}[htbp]
\centering
\caption{Hourly standard deviations of DAM prices (\$/MWh).}
\label{tab:hourly_std_three_rows}
\begin{tabular}{c|cccccccc}
\toprule
Hour \(t\) 
& 0 & 1 & 2 & 3 & 4 & 5 & 6 & 7 \\
Std. dev. 
& 19.13 & 18.35 & 18.13 & 17.98 & 17.87 & 20.12 & 27.43 & 31.74 \\
\midrule
Hour \(t\) 
& 8 & 9 & 10 & 11 & 12 & 13 & 14 & 15 \\
Std. dev. 
& 32.90 & 30.91 & 30.23 & 30.17 & 30.49 & 30.44 & 31.06 & 32.62 \\
\midrule
Hour \(t\) 
& 16 & 17 & 18 & 19 & 20 & 21 & 22 & 23 \\
Std. dev. 
& 33.02 & 37.41 & 50.98 & 27.35 & 22.09 & 21.33 & 20.33 & 19.70 \\
\bottomrule
\end{tabular}
\end{table}

To model uncertainty around a representative daily price path, we use the 2024 mean trajectory
\[
\bar{\boldsymbol{\lambda}}=(\bar{\lambda}_1,\dots,\bar{\lambda}_{24})\in\mathbb{R}^{24}
\]
as the baseline, and construct stochastic price scenarios using a Gaussian process. Let
\[
r_{d,t}=\lambda_{d,t}-\bar\lambda_t
\]
denote the residual price on day \(d\in[366]\) at hour \(t\in[T]\), and define the empirical residual standard deviation (Table~\ref{tab:hourly_std_three_rows}) at hour \(t\) by
\[
\hat\sigma_t
=
\sqrt{\frac{1}{365}\sum_{d=1}^{366}(r_{d,t}-\bar r_t)^2},
\qquad
\bar r_t=\frac{1}{366}\sum_{d=1}^{366}r_{d,t}.
\]
We also let \(\hat\rho_{tt'}\) denote the \((t,t')\) entry of the empirical correlation matrix of the residuals across days:
\[
\hat\rho_{tt'}
=
\frac{\sum_{d=1}^{366}(r_{d,t}-\bar r_t)(r_{d,t'}-\bar r_{t'})}
{\sqrt{\sum_{d=1}^{366}(r_{d,t}-\bar r_t)^2}\sqrt{\sum_{d=1}^{366}(r_{d,t'}-\bar r_{t'})^2}}.
\]
We model intertemporal dependence across hours using an exponential correlation kernel
\[
R(\beta)_{tt'}=\exp\!\bigl(-\beta|t-t'|\bigr),
\qquad t,t'\in[T],
\]
where \(\beta>0\) is a learnable parameter. We estimate \(\beta\) by least squares:
\[
\hat\beta
=
\argmin_{\beta>0}
\sum_{t\neq t'}
\bigl(\hat\rho_{tt'}-\exp(-\beta|t-t'|)\bigr)^2.
\]
Given \((\bar{\boldsymbol{\lambda}},\hat{\boldsymbol{\sigma}},\hat\beta)\), we can generate \(|\Omega|\) price scenarios for each uncertainty level \(\kappa\ge 0\) from a Gaussian process with covariance kernel
\[
\Sigma(\kappa)_{tt'}
=
\kappa^2 \hat\sigma_t \hat\sigma_{t'} \exp\!\bigl(-\hat\beta|t-t'|\bigr),
\qquad t,t'\in[T].
\]
Let \(D=\mathrm{diag}(\hat\sigma_1,\dots,\hat\sigma_{24})\), then
\[
\Sigma(\kappa)=\kappa^2 D R(\hat\beta) D.
\]
We then sample
\[
\boldsymbol{\lambda}_{\omega}
=
\bar{\boldsymbol{\lambda}} + L(\kappa)\mathbf z_{\omega},
\qquad
\mathbf z_{\omega}\overset{\mathrm{i.i.d.}}{\sim}\mathrm{Gaussian}(\mathbf 0,I_{24}),
\]
where \(L(\kappa)L(\kappa)^\top=\Sigma(\kappa)\). Under this specification, the empirical vector \((\hat\sigma_t)_{t=1}^{24}\) captures the hourly volatility profile, \(\hat\beta\) captures intertemporal dependence, and the scalar \(\kappa\) provides a one-dimensional control over the overall uncertainty level. In our simulations, \(\hat\beta=0.0265\), and the resulting scenario paths are shown in Figure~\ref{2024_scenarios}.

\begin{figure}[htbp]
  \centerline{\includegraphics[width=1\textwidth]{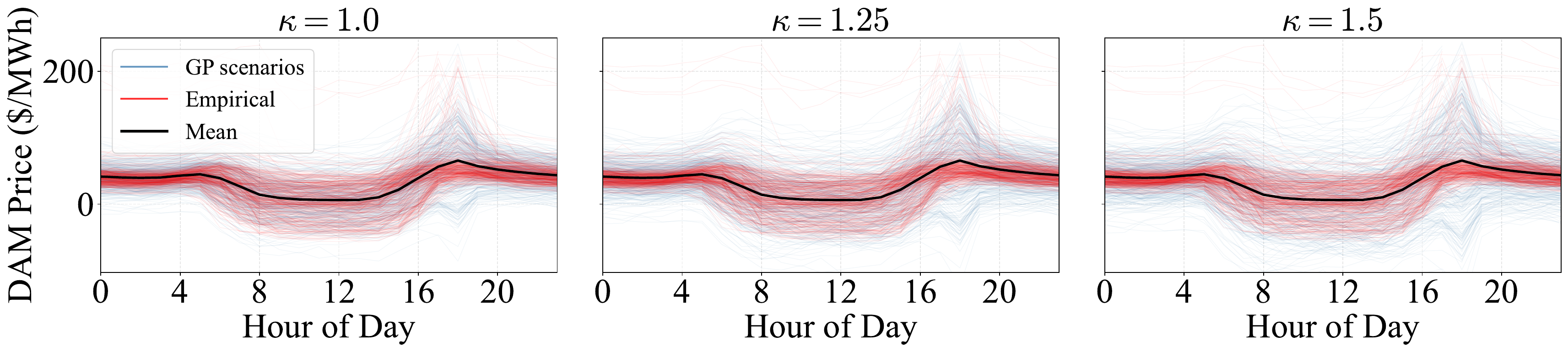}}
  \caption{Generated DAM price scenarios around the 2024 mean trajectory.}
  \label{2024_scenarios}
\end{figure}

\paragraph{Battery configurations.}
We consider a four-hour battery operated under \eqref{eq:lp2} with \(\underline{s}=0\) MWh, \(\overline{s}=32\) MWh, \(\overline{x}=8\) MW, and \(\eta=\sqrt{0.85}\).

\paragraph{Simulation 1: risk-neutral (\(\theta=1\)), early-evening discharging.}
We solve a \(T=6\)  \eqref{eq:lp2} over early evening (4pm--10pm), varying  initial SoC \(\{0,8,16,24,32\}\) at start of discharging hours. 

Figure~\ref{exp1_results} shows that lower starting SoC leads to higher sell bid prices. When the battery begins the evening with less energy, it becomes more selective: it withholds energy from moderate-price hours and reserves discharging capability only for the peaks. Conversely, when the battery begins with abundant energy, the sell bids moves downward, indicating that the operator is willing to monetize inventory over a broader range of prices.

Table~\ref{tab:sigma_s0} quantifies the underlying mechanism through \(\lambda_t^{\mathrm{opp}\star}\). For every fixed uncertainty level \(\kappa\), the \eqref{eq:emoc_discharge} is sharply decreasing with respect to starting SoC: when the battery is inventory-scarce, one additional MWh carried into the evening is much more valuable. 

Table~\ref{tab:sigma_s0} also verifies the effect of uncertainty \(\kappa\) is strongly dependent on starting SoC. For low and intermediate starting SoC, i.e., \(\{8, 16\}\), \(\lambda_t^{\mathrm{opp}\star}\) rises with \(\kappa\), reflecting that greater intraday price volatility increases the option value of waiting for a high-price realization, rather than committing it to the more common prices. This effect is strongest when both uncertainty is high and initial SoC is low, precisely because the \eqref{eq:emoc_discharge} of an early evening hour is then largest. By contrast, when the battery starts the evening with abundant energy, i.e., \(\{24, 32\}\), this effect weakens and can reverse. With a high starting SoC, i.e., \(32\), the battery already has enough inventory to discharge in the most valuable peak hours, and the remaining stored energy can still be liquidated before the horizon ends subject to the hourly \eqref{eq:power-limit} constraints. This excess inventory liquidation pushes optimal sell bid prices downward even under large \(\kappa\).

\begin{table}[htbp] \centering \caption{\(\lambda_t^{\mathrm{opp}\star}\) by uncertainty level \(\kappa\) and starting SoC \(s_{16}\) in simulation 1.} \label{tab:sigma_s0} \begin{tabular}{lcccc} \toprule \(\kappa \backslash s_{16}\) & 8 & 16 & 24 & 32 \\ \midrule 1 & 68.37 & 58.83 & 51.00 & 43.32 \\ 1.25 & 76.38 & 63.32 & 51.69 & 40.88 \\ 1.5 & 84.77 & 67.64 & 52.62 & 38.26 \\ \bottomrule \end{tabular} 
\end{table}

\begin{figure}[htbp]
  \centerline{\includegraphics[width=1\textwidth]{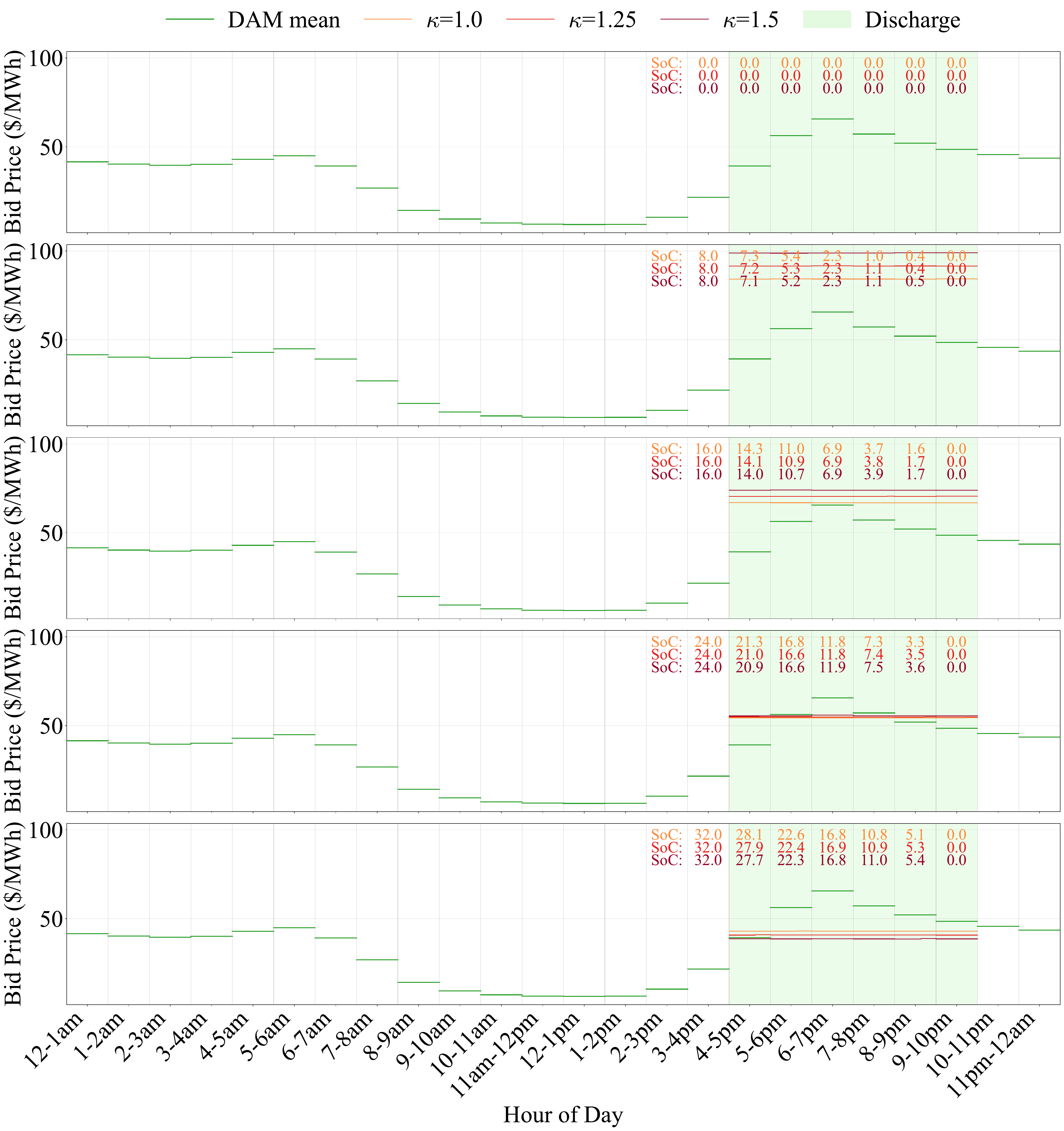}}
  \caption{Risk-neutral discharging simulation.}
  \label{exp1_results}
\end{figure}

\begin{figure}[htbp]
  \centerline{\includegraphics[width=1\textwidth]{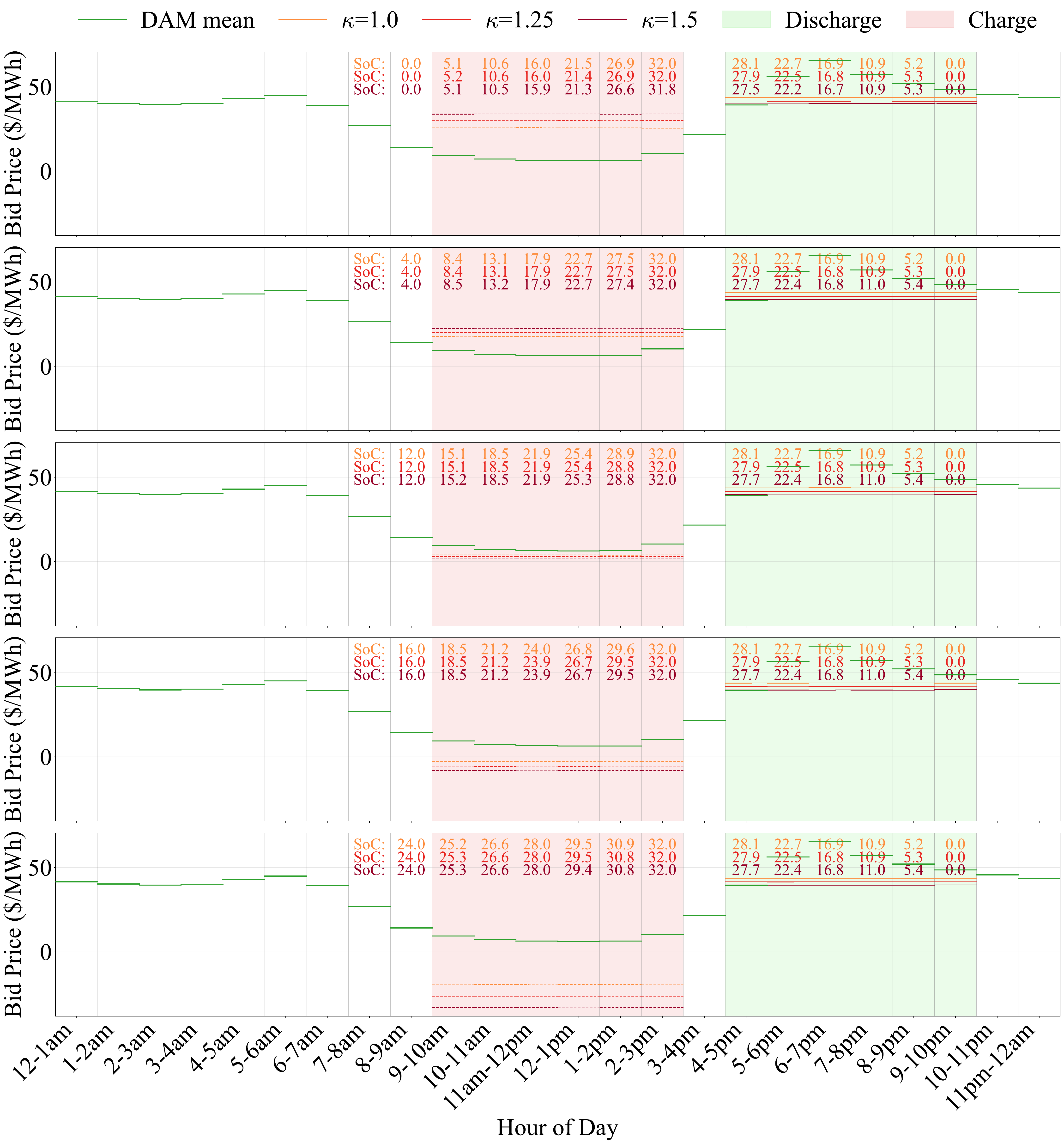}}
  \caption{Risk-neutral simulation.}
  \label{exp2_results}
\end{figure}

\begin{figure}[htbp]
  \centerline{\includegraphics[width=1\textwidth]{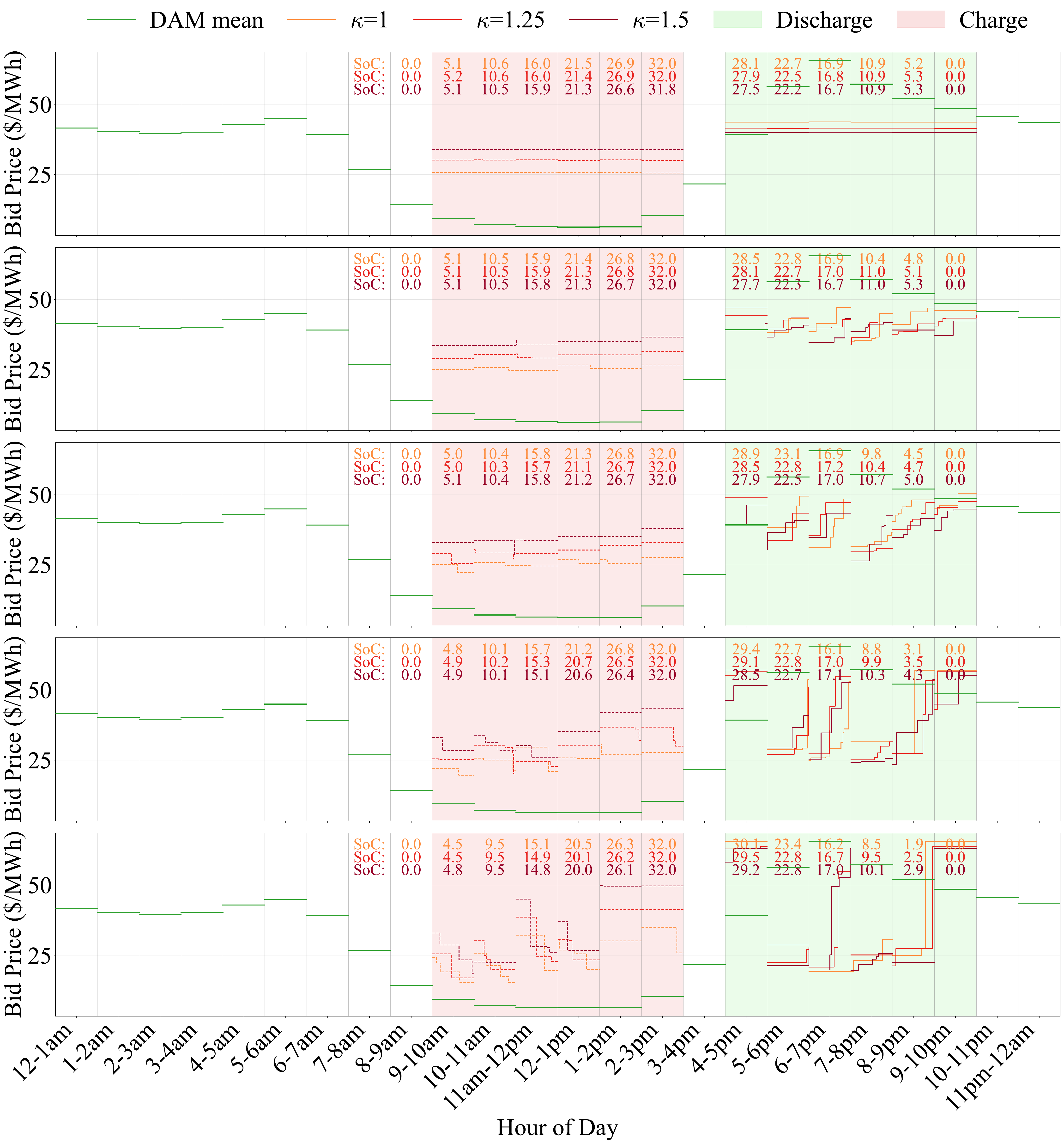}}
  \caption{Risk-averse simulation with \(s_{8}=0\) and \(\theta\in\{1, 0.95, 0.9, 0.8, 0.7\}\) from top to bottom.}
  \label{exp3_0_results}
\end{figure}

\paragraph{Simulation 2: risk-neutral (\(\theta=1\)), midday charging and evening discharging.}
Our second simulation studies a more realistic intraday cycle. We solve a \(T=12\)  \eqref{eq:lp2} in which the battery charges during solar-trough hours (9am--3pm) and discharges during early evening (4pm--10pm), varying the starting SoC \(\{0,4,12,16,24\}\) at the beginning of the charging window.

Across all panels in Figure~\ref{exp2_results}, the battery is effectively driven toward a common full inventory position before the discharge window begins, and then empties before the discharge window ends. When the battery starts the day with little energy, it is willing to buy at substantially higher midday prices in order to rebuild inventory before the evening peak. When it starts with more energy, its buy bid prices fall sharply and can even become negative, indicating that additional charging is attractive only in low-price realizations. By contrast, the evening sell bids are much more stable across the panels because the battery enters the discharge block all from n full SoC position in each case.

Uncertainty reinforces this mechanism, but again depends on the starting SoC. When the battery begins the day with low SoC, i.e., \(\{0,4\}\), a higher \(\kappa\) raises the value of securing inventory for the evening peak, so the battery is willing to charge even at less favorable midday prices. Under these initializations, uncertainty increases the \eqref{eq:emov_charge}, because future high-price discharge opportunities are valuable and inventory is scarce. By contrast, when the battery starts with a high SoC, i.e., \(\{12,16,24\}\), this scarcity largely disappears. Additional charging then becomes a tiny incremental activity rather than a necessity, so higher uncertainty makes the battery more selective on the buy side: it is willing to charge only in deeper trough realizations, and its buy bids move downward, sometimes into negative prices.

\paragraph{Simulation 3: risk-averse, midday charging and evening discharging.}
Our third simulation uses the same \(T=12\) hour setup as Simulation~2, but now introduces downside-risk aversion through the mean--CVaR objective by varying \(\theta\in\{1,0.95,0.9,0.8,0.7\}\), while fixing \(\alpha=0.95\). In simulation 2, we study how uncertainty \(\kappa\) reshapes bidding even under risk neutrality. Here, holding \(\kappa\) fixed, we ask how risk management itself changes bidding behavior.

Risk management trims a small amount of upside in exchange for a substantially stronger downside revenue floor. Across all starting SoC values \(s_8\in\{0,4,8,12,16,24\}\), the battery still charges toward a full inventory position before the evening discharge window and then empties by the end of that window.  Lowering \(\theta\) produces only a modest reduction in expected revenue, but a large improvement in tail performance. For example, when \(s_8=0\) and \(\kappa=1\), lowering \(\theta\) from \(1\) to \(0.7\) reduces expected revenue, \(\sum_{\omega\in\Omega}\pi_\omega r_\omega\), from \(\$2404.55\) to \(\$2287.02\), a drop of less than \(5\%\), while increasing \((1-\alpha)\)-tail revenue, \(-\tau-\frac{1}{1-\alpha}\sum_{\omega\in\Omega}\pi_\omega z_\omega\),
from \(-\$188.14\) to \(\$533.14\). Similar patterns hold across other \((s_8,\kappa)\).

The bid curves (Figures~\ref{exp3_0_results}--\ref{exp3_24_results}) show that risk-managed bidding is not pure de-risking, but rather a rebalancing between core execution and tail opportunity. As \(\theta\) falls, the battery becomes more willing to discharge stored energy across a broader set of price realizations, rather than reserving all discharge for only the most favorable price spikes. This willingness lowers part of the sell bid curve and secures execution in more scenarios, thereby locking in revenue. At the same time, the battery still assigns some quantity to very high sell bid prices in order to preserve exposure to rare but valuable spike realizations. The resulting bid curve is therefore layered: one portion is priced to execute with relatively high probability and stabilize revenue, while another portion remains posted at very high (withholding-looking) prices to retain upside from the right tail of the price distribution. A similar layered structure appears in the charging window.

\section{Conclusion}

This paper studies how wholesale price uncertainty and operators' risk management reshape battery energy arbitrage in wholesale electricity markets. We develop an asset-level LP model in which a price-taking battery chooses stepwise bid curves. The model incorporates uncertainty through finitely many price scenarios and operator's risk preferences through a mean--CVaR objective. Our empirical analysis explains a range of battery behaviors without requiring the exercise of market power. Using CAISO DAM price data, we study how withholding behavior varies with initial SoC, wholesale price uncertainty, and risk management, and show that such patterns need not imply market power. We also characterize how risk management reshapes bid curves into layered structures. These findings point to the need for market monitoring frameworks that distinguish between price-taking, risk-adjusted bidding and true exercises of market power.

An important future direction is to extend this energy arbitrage model to a richer market participation framework. In practice, batteries co-optimize across multiple revenue sources, including energy arbitrage, ancillary services, imbalance reserves, and resource adequacy obligations. These additional opportunities can affect bids through opportunity costs: capacity reserved for ancillary services or reliability obligations cannot always be used for energy arbitrage, and SoC may need to be preserved to satisfy must-offer requirements. Observed bids may also reflect the shadow value of alternative market opportunities and obligations. 

\newpage
\printbibliography

\newpage
\appendix
\section{Appendix}
\subsection{Existing battery modeling}
\label{app_existing_work}
We summarize two representative classes of battery bidding models and explain why they are not sufficient for our purpose.

\paragraph{Deterministic bidding via linear programming.}
We first recall the standard deterministic quantity-only model that underlies much of the battery literature, e.g., \cite{pozo2023convex}. Prices are known and exogenous. Given an initial state of charge (SoC) \(s_0\), the battery chooses hourly charging and discharging quantities,
\[
(x^{\mathrm{sell}}_1,x^{\mathrm{buy}}_1),\ldots,(x^{\mathrm{sell}}_T,x^{\mathrm{buy}}_T),
\]
together with the SoC trajectory \(s_1,\ldots,s_T\), to maximize total revenue:
\begin{equation}
\label{eq:existing_milp}
\tag{\mbox{existing milp}}
\begin{array}{lll}
\maximize
& \sum_{t=1}^T \lambda^{\mathrm{DA}}_t (x^{\mathrm{sell}}_t-x^{\mathrm{buy}}_t) & \\
\textbf{subject to} && \\
\mbox{(soc)} & s_t = s_{t-1} + \eta x^{\mathrm{buy}}_t - \frac{1}{\eta}x^{\mathrm{sell}}_t, & t\in[T], \nonumber\\
& \underline{s} \le s_t \le \overline{s}, & t\in[T], \nonumber\\
\mbox{(power limit)} & 0 \le x_t^{\mathrm{sell}} \le \overline{x}(1-z_t), &  t\in[T],\nonumber\\
& 0 \le x_t^{\mathrm{buy}} \le \overline{x}z_t, & t\in[T],\nonumber \\
\mbox{(binary)} & z_t \in \{0,1\}, &  t\in[T],\nonumber\\
\textbf{variables} &&\\
\mbox{dispatch quantities} & (x^{\mathrm{sell}}_1, x^{\mathrm{buy}}_1),\dots,(x^{\mathrm{sell}}_T, x^{\mathrm{buy}}_T), \\
\mbox{SoC states} & s_1,\dots,s_T, \\
\mbox{mode binaries} & z_{1},\dots,z_{T}. 
\end{array}
\end{equation}
The binary variable \(z_t\) enforces that the battery cannot charge and discharge simultaneously in hour \(t\). 

\paragraph{Stochastic bidding via dynamic programming.}
A second class of models extends the deterministic quantity-only formulation to the stochastic setting and derives optimal quantities, together with implied bid prices, from dynamic programming, e.g., \cite{xu2020operational}. Prices \(\lambda^{\mathrm{DA}}_1,\dots,\lambda^{\mathrm{DA}}_T\) are treated as exogenous random variables with known distribution. The model uses separate charging and discharging variables \(x_t^{\mathrm{buy}},x_t^{\mathrm{sell}}\ge 0\) and replaces the binary restriction on no simultaneous charge/discharge  by the convex relaxation
\[
x_t^{\mathrm{sell}} = 0
\quad \text{whenever } \lambda_t^{\mathrm{DA}}\le 0,
\]
that is,
\[
x_t^{\mathrm{sell}}\le \mathbb{I}\{\lambda_t^{\mathrm{DA}}>0\}\,\overline{x},
\]
which is sufficient to preclude simultaneous charging and discharging (cf.\ Remark~3 in \cite{xu2020operational}). The resulting problem is
\begin{equation}
\label{eq:existing_dp}
\tag{\mbox{existing dp}}
\begin{array}{lll}
\maximize
& \sum_{t=1}^T \lambda^{\mathrm{DA}}_t (x^{\mathrm{sell}}_t-x^{\mathrm{buy}}_t) + V_T(s_T) & \\
\textbf{subject to} && \\
\mbox{(soc)} & s_t = s_{t-1} + \eta x^{\mathrm{buy}}_t - \frac{1}{\eta}x^{\mathrm{sell}}_t, & t\in[T], \nonumber\\
& \underline{s} \le s_t \le \overline{s}, & t\in[T], \nonumber\\
\mbox{(power limit)} & 0 \le x_t^{\mathrm{sell}} \le \mathbb{I}\{\lambda^{\mathrm{DA}}_t \ge 0\}\overline{x}, &  t\in[T],\nonumber\\
& 0 \le x_t^{\mathrm{buy}} \le \overline{x}, & t\in[T],\nonumber \\
\textbf{variables} &&\\
\mbox{dispatch quantities} & (x^{\mathrm{sell}}_1, x^{\mathrm{buy}}_1),\dots,(x^{\mathrm{sell}}_T, x^{\mathrm{buy}}_T), \\
\mbox{SoC states} & s_1,\dots,s_T.
\end{array}
\end{equation}
This model is convex, which makes sensitivity and shadow-price analysis tractable.

Let \(V_t(s)\) denote the expected value function at the beginning of period \(t\) given SoC \(s\in[\underline{s},\overline{s}]\). For each \(t\in[T]\), given incoming SoC \(s_{t-1}\) and realized price \(\lambda_t^{\mathrm{DA}}\), define the feasible action set
\[
\mathcal A_t(s_{t-1},\lambda_t^{\mathrm{DA}})
:=
\left\{
(x_t^{\mathrm{buy}},x_t^{\mathrm{sell}})\in\mathbb R_+^2:
\begin{array}{l}
0\le x_t^{\mathrm{buy}}\le \overline{x},\\[2pt]
0\le x_t^{\mathrm{sell}}\le \mathbb{I}\{\lambda_t^{\mathrm{DA}}>0\}\,\overline{x},\\[2pt]
\underline{s}\le s_{t-1}+\eta x_t^{\mathrm{buy}}-\dfrac{1}{\eta}x_t^{\mathrm{sell}}\le \overline{s}
\end{array}
\right\}.
\]
The post-decision value at time \(t\) is
\begin{align*}
Q_{t-1}(s_{t-1},\lambda_t^{\mathrm{DA}})
&=
\max_{(x_t^{\mathrm{buy}},x_t^{\mathrm{sell}})\in\mathcal A_t(s_{t-1},\lambda_t^{\mathrm{DA}})}
\left\{
\lambda_t^{\mathrm{DA}}(x_t^{\mathrm{sell}}-x_t^{\mathrm{buy}})
+V_t\!\left(s_{t-1}+\eta x_t^{\mathrm{buy}}-\frac{1}{\eta}x_t^{\mathrm{sell}}\right)
\right\},
\end{align*}
and the Bellman recursion is
\begin{align*}
V_{t-1}(s_{t-1})
:=
\mathbb E_{\lambda_t^{\mathrm{DA}}}\!\left[Q_{t-1}(s_{t-1},\lambda_t^{\mathrm{DA}})\right],
\qquad t\in[T],
\end{align*}
with terminal condition \(V_T(s)\). The function \(V_t(\cdot)\) is concave and captures the continuation value of remaining inventory \cite{xu2020operational}. Assuming differentiability for simplicity, let
\[
v_t(s):=\frac{d}{ds}V_t(s).
\]
Then the battery discharges a candidate amount
\[
x\in\Big[0,\min\{\overline{x},\,\eta(s_{t-1}-\underline{s})\}\Big]
\]
only if
\[
\mathrm{realized}\ \lambda_t^{\mathrm{DA}}
\ge
\frac{1}{\eta}\,v_t\!\left(s_{t-1}-\frac{x}{\eta}\right),
\]
and charges a candidate amount
\[
x\in\Big[0,\min\{\overline{x},\,\tfrac{\overline{s}-s_{t-1}}{\eta}\}\Big]
\]
only if
\[
\mathrm{realized}\ \lambda_t^{\mathrm{DA}}
\le
\eta\,v_t\!\left(s_{t-1}+\eta x\right).
\]

\section{Proof of Theorem~\ref{thm:kkt_bangbang}}
\begin{proof}[Proof of Theorem~\ref{thm:kkt_bangbang}]
Because \eqref{eq:lp} is a linear program, strong duality holds and the KKT conditions are necessary and sufficient for optimality. Its Lagrangian is
\begin{align*}
\mathcal L
&=
\theta\sum_{\omega\in\Omega}\pi_\omega r_\omega
-(1-\theta)\left(\tau+\frac{1}{1-\alpha}\sum_{\omega\in\Omega}\pi_\omega z_\omega\right)\\
&\quad
+\sum_{\omega\in\Omega} \kappa_\omega\Bigg(
\sum_{t\in[T]} \lambda^{\mathrm{DA}}_{t,\omega}\Big(
\mathbf b^{\mathrm{sell}}_{t,\omega}{}^\top \mathbf x^{\mathrm{sell}}_t
-\mathbf b^{\mathrm{buy}}_{t,\omega}{}^\top \mathbf x^{\mathrm{buy}}_t
\Big)
- r_\omega
\Bigg)\\
&\quad
-\sum_{\omega\in\Omega} \gamma_\omega\big(-z_\omega-r_\omega-\tau\big)
-\sum_{\omega\in\Omega} \delta_\omega(-z_\omega)\\
&\quad
+\sum_{t\in[T]}\lambda_{t}^{\mathrm{opp}}\!\Bigg(
s_t-s_{t-1}
-\sum_{\omega\in\Omega}\pi_\omega\Big(
\eta\,\mathbf b^{\mathrm{buy}}_{t,\omega}{}^\top \mathbf x^{\mathrm{buy}}_{t}
-\frac{1}{\eta}\,\mathbf b^{\mathrm{sell}}_{t,\omega}{}^\top \mathbf x^{\mathrm{sell}}_{t}
\Big)
\Bigg)\\
&\quad
-\sum_{t\in[T]} \overline{\mu}_t(s_t-\overline s)
-\sum_{t\in[T]} \underline{\mu}_t(\underline s-s_t)\\
&\quad
-\sum_{t\in[T]} \overline{\nu}^{\mathrm{sell}}_{t}\Big(\sum_{n\in[N]}x^{\mathrm{sell}}_{t,n}-\overline x\Big)
+\sum_{t\in[T]}\sum_{n\in[N]}\underline{\nu}^{\mathrm{sell}}_{t,n}x^{\mathrm{sell}}_{t,n}\\
&\quad
-\sum_{t\in[T]} \overline{\nu}^{\mathrm{buy}}_{t}\Big(\sum_{n\in[N]}x^{\mathrm{buy}}_{t,n}-\overline x\Big)
+\sum_{t\in[T]}\sum_{n\in[N]}\underline{\nu}^{\mathrm{buy}}_{t,n}x^{\mathrm{buy}}_{t,n}.
\end{align*}

\paragraph{The CVaR multipliers have the claimed tail structure.}
Stationarity with respect to \(z_\omega\) gives
\[
0=\frac{\partial \mathcal L}{\partial z_\omega}
=
-\frac{1-\theta}{1-\alpha}\pi_\omega+\gamma_\omega^\star+\delta_\omega^\star.
\]
Since \(\delta_\omega^\star\ge 0\), this implies
\[
0\le \gamma_\omega^\star \le \frac{1-\theta}{1-\alpha}\pi_\omega,
\qquad \forall \omega\in\Omega.
\]
Stationarity with respect to \(\tau\) gives
\[
0=\frac{\partial \mathcal L}{\partial \tau}
=-(1-\theta)+\sum_{\omega\in\Omega}\gamma_\omega^\star,
\]
hence
\[
\sum_{\omega\in\Omega}\gamma_\omega^\star = 1-\theta.
\]
Complementary slackness for the constraint \(z_\omega\ge 0\) yields
\[
\delta_\omega^\star z_\omega^\star=0.
\]
Therefore, if \(z_\omega^\star>0\), then \(\delta_\omega^\star=0\), so
\[
\gamma_\omega^\star=\frac{1-\theta}{1-\alpha}\pi_\omega.
\]
Since
\[
z_\omega^\star=\max\{-r_\omega^\star-\tau^\star,0\},
\]
the condition \(z_\omega^\star>0\) is equivalent to
\[
r_\omega^\star<-\tau^\star,
\]
that is, scenario \(\omega\) lies in the lowest \((1-\alpha)\)-tail of the revenue distribution.

\paragraph{Risk-adjusted weights.}
Stationarity with respect to \(r_\omega\) gives
\[
0=\frac{\partial \mathcal L}{\partial r_\omega}
=\theta\pi_\omega-\kappa_\omega^\star+\gamma_\omega^\star,
\]
so
\[
\kappa_\omega^\star=\theta\pi_\omega+\gamma_\omega^\star,
\qquad \forall \omega\in\Omega.
\]

\paragraph{Opportunity-cost recursion.}
Stationarity with respect to \(s_t\) gives, for \(t=1,\dots,T-1\),
\[
0=\frac{\partial \mathcal L}{\partial s_t}
=
\lambda_t^{\mathrm{opp}\star}-\lambda_{t+1}^{\mathrm{opp}\star}
-\overline{\mu}_t^\star+\underline{\mu}_t^\star,
\]
and for the terminal hour \(T\),
\[
0=\frac{\partial \mathcal L}{\partial s_T}
=
\lambda_T^{\mathrm{opp}\star}
-\overline{\mu}_T^\star+\underline{\mu}_T^\star.
\]
Hence
\[
\lambda_t^{\mathrm{opp}\star}
=
\lambda_{t+1}^{\mathrm{opp}\star}
+\overline{\mu}_t^\star-\underline{\mu}_t^\star,
\qquad t=1,\dots,T-1,
\]
and
\[
\lambda_T^{\mathrm{opp}\star}
=
\overline{\mu}_T^\star-\underline{\mu}_T^\star.
\]
Backward substitution yields
\[
\lambda_t^{\mathrm{opp}\star}
=
\sum_{\tau=t}^T
\bigl(\overline{\mu}_\tau^\star-\underline{\mu}_\tau^\star\bigr),
\qquad t\in[T].
\]

\paragraph{Stationarity for a sell step.}
Fix \((t,n)\). Stationarity with respect to \(x^{\mathrm{sell}}_{t,n}\) gives
\begin{align*}
0=\frac{\partial\mathcal L}{\partial x^{\mathrm{sell}}_{t,n}}
&=
\sum_{\omega\in\Omega}\kappa_\omega^\star b^{\mathrm{sell}}_{t,\omega,n}\lambda^{\mathrm{DA}}_{t,\omega}
-\frac{1}{\eta}\lambda_t^{\mathrm{opp}}{}^\star\sum_{\omega\in\Omega}\pi_\omega b^{\mathrm{sell}}_{t,\omega,n}
+\underline{\nu}^{\mathrm{sell}}_{t,n}{}^\star
-\overline{\nu}^{\mathrm{sell}}_{t}{}^\star.
\end{align*}
Since \(\sum_{\omega\in\Omega}\pi_\omega b^{\mathrm{sell}}_{t,\omega,n}>0\) and
\(\kappa_\omega^\star=\theta\pi_\omega+\gamma_\omega^\star\), this is equivalent to
\begin{align*}
\frac{1}{\eta}\lambda^{\mathrm{opp}}_t{}^\star
&=
\frac{\sum_{\omega\in\Omega}(\theta\pi_\omega+\gamma_\omega^\star)
b^{\mathrm{sell}}_{t,\omega,n}\lambda^{\mathrm{DA}}_{t,\omega}}
{\sum_{\omega\in\Omega}\pi_\omega b^{\mathrm{sell}}_{t,\omega,n}}
+
\frac{\underline{\nu}^{\mathrm{sell}}_{t,n}{}^\star-\overline{\nu}^{\mathrm{sell}}_{t}{}^\star}
{\sum_{\omega\in\Omega}\pi_\omega b^{\mathrm{sell}}_{t,\omega,n}}.
\end{align*}
Now suppose \(\sum_{n\in[N]}x^{\mathrm{sell}}_{t,n}{}^\star<\overline{x}\). Then complementary slackness implies
\[
\overline{\nu}^{\mathrm{sell}}_{t}{}^\star
\Big(\sum_{n\in[N]}x^{\mathrm{sell}}_{t,n}{}^\star-\overline x\Big)=0
\quad\Longrightarrow\quad
\overline{\nu}^{\mathrm{sell}}_{t}{}^\star=0.
\]
Using dual feasibility \(\underline{\nu}^{\mathrm{sell}}_{t,n}{}^\star\ge 0\), we obtain, for every \(n\in[N]\),
\[
\frac{1}{\eta}\lambda^{\mathrm{opp}}_t{}^\star
\ge
\frac{\sum_{\omega\in\Omega}(\theta\pi_\omega+\gamma_\omega^\star)
b^{\mathrm{sell}}_{t,\omega,n}\lambda^{\mathrm{DA}}_{t,\omega}}
{\sum_{\omega\in\Omega}\pi_\omega b^{\mathrm{sell}}_{t,\omega,n}}.
\]
Therefore, by contrapositive,
\[
\frac{1}{\eta}\lambda^{\mathrm{opp}}_t{}^\star
<
\max_{n\in[N]}
\left\{
\frac{\sum_{\omega\in\Omega}(\theta\pi_\omega+\gamma_\omega^\star)
b^{\mathrm{sell}}_{t,\omega,n}\lambda^{\mathrm{DA}}_{t,\omega}}
{\sum_{\omega\in\Omega}\pi_\omega b^{\mathrm{sell}}_{t,\omega,n}}
\right\}
\;\Longrightarrow\;
\sum_{n\in[N]}x^{\mathrm{sell}}_{t,n}{}^\star=\overline{x}.
\]

\paragraph{Stationarity for a buy step.}
Fix \((t,n)\). Stationarity with respect to \(x^{\mathrm{buy}}_{t,n}\) gives
\begin{align*}
0=\frac{\partial\mathcal L}{\partial x^{\mathrm{buy}}_{t,n}}
&=
-\sum_{\omega\in\Omega}\kappa_\omega^\star b^{\mathrm{buy}}_{t,\omega,n}\lambda^{\mathrm{DA}}_{t,\omega}
+\eta\,\lambda_t^{\mathrm{opp}}{}^\star\sum_{\omega\in\Omega}\pi_\omega b^{\mathrm{buy}}_{t,\omega,n}
+\underline{\nu}^{\mathrm{buy}}_{t,n}{}^\star
-\overline{\nu}^{\mathrm{buy}}_{t}{}^\star.
\end{align*}
Equivalently,
\begin{align*}
\eta\,\lambda^{\mathrm{opp}}_t{}^\star
&=
\frac{\sum_{\omega\in\Omega}(\theta\pi_\omega+\gamma_\omega^\star)
b^{\mathrm{buy}}_{t,\omega,n}\lambda^{\mathrm{DA}}_{t,\omega}}
{\sum_{\omega\in\Omega}\pi_\omega b^{\mathrm{buy}}_{t,\omega,n}}
+
\frac{\overline{\nu}^{\mathrm{buy}}_{t}{}^\star-\underline{\nu}^{\mathrm{buy}}_{t,n}{}^\star}
{\sum_{\omega\in\Omega}\pi_\omega b^{\mathrm{buy}}_{t,\omega,n}}.
\end{align*}
If \(\sum_{n\in[N]}x^{\mathrm{buy}}_{t,n}{}^\star<\overline{x}\), then complementary slackness implies
\[
\overline{\nu}^{\mathrm{buy}}_{t}{}^\star=0.
\]
Using \(\underline{\nu}^{\mathrm{buy}}_{t,n}{}^\star\ge 0\), we obtain for every \(n\in[N]\),
\[
\eta\lambda^{\mathrm{opp}}_t{}^\star
\le
\frac{\sum_{\omega\in\Omega}(\theta\pi_\omega+\gamma_\omega^\star)
b^{\mathrm{buy}}_{t,\omega,n}\lambda^{\mathrm{DA}}_{t,\omega}}
{\sum_{\omega\in\Omega}\pi_\omega b^{\mathrm{buy}}_{t,\omega,n}}.
\]
Therefore, by contrapositive,
\[
\eta\lambda^{\mathrm{opp}}_t{}^\star
>
\min_{n\in[N]}
\left\{
\frac{\sum_{\omega\in\Omega}(\theta\pi_\omega+\gamma_\omega^\star)
b^{\mathrm{buy}}_{t,\omega,n}\lambda^{\mathrm{DA}}_{t,\omega}}
{\sum_{\omega\in\Omega}\pi_\omega b^{\mathrm{buy}}_{t,\omega,n}}
\right\}
\;\Longrightarrow\;
\sum_{n\in[N]}x^{\mathrm{buy}}_{t,n}{}^\star=\overline{x}.
\]
\end{proof}

\section{Empirical Results}

\begin{figure}[htbp]
  \centerline{\includegraphics[width=1\textwidth]{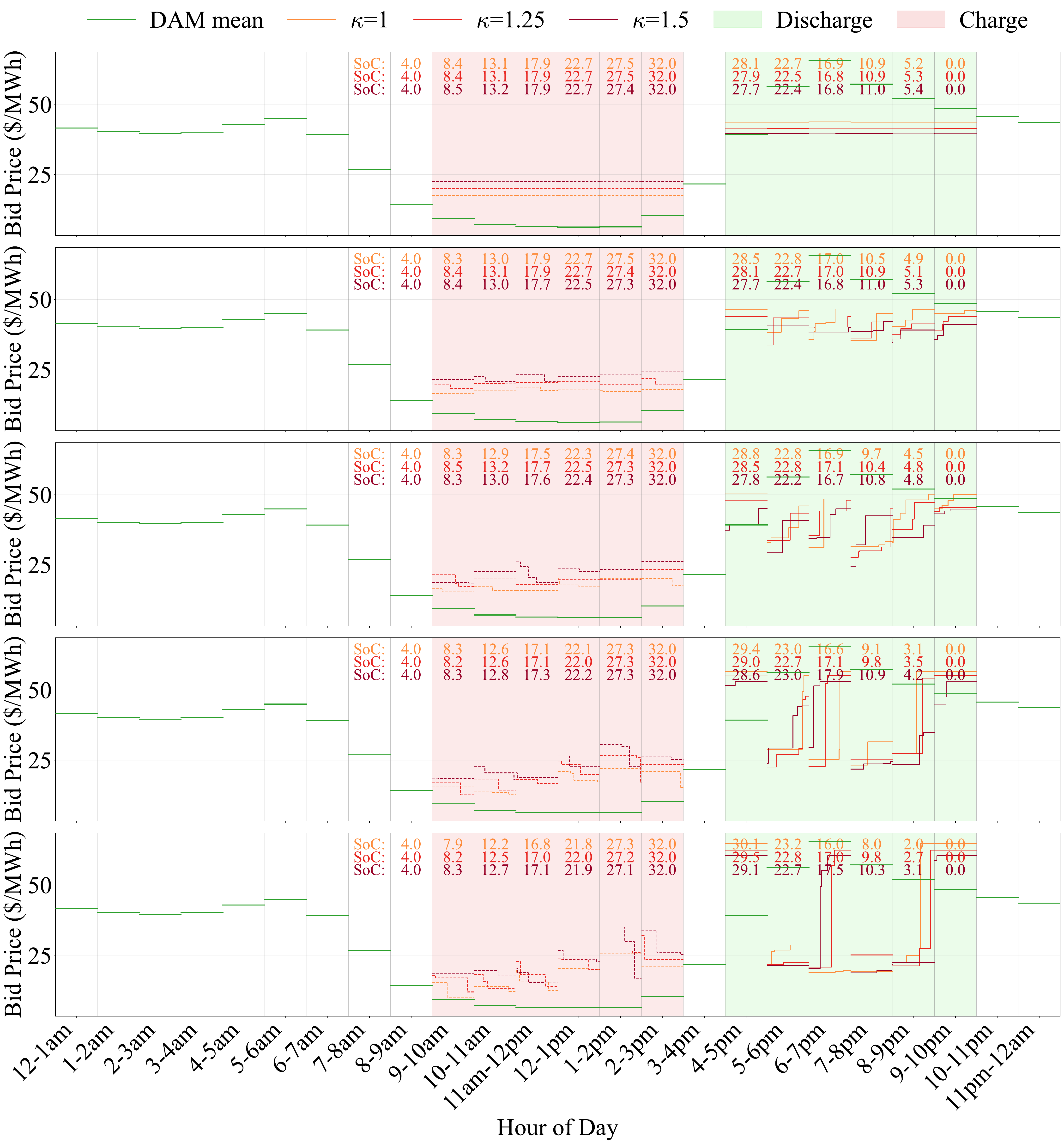}}
  \caption{Risk-aware simulation with \(s_{8}=4\) and \(\theta\in\{1, 0.95, 0.9, 0.8, 0.7\}\) from top to bottom.}
  \label{exp3_4_results}
\end{figure}

\begin{figure}[htbp]
  \centerline{\includegraphics[width=1\textwidth]{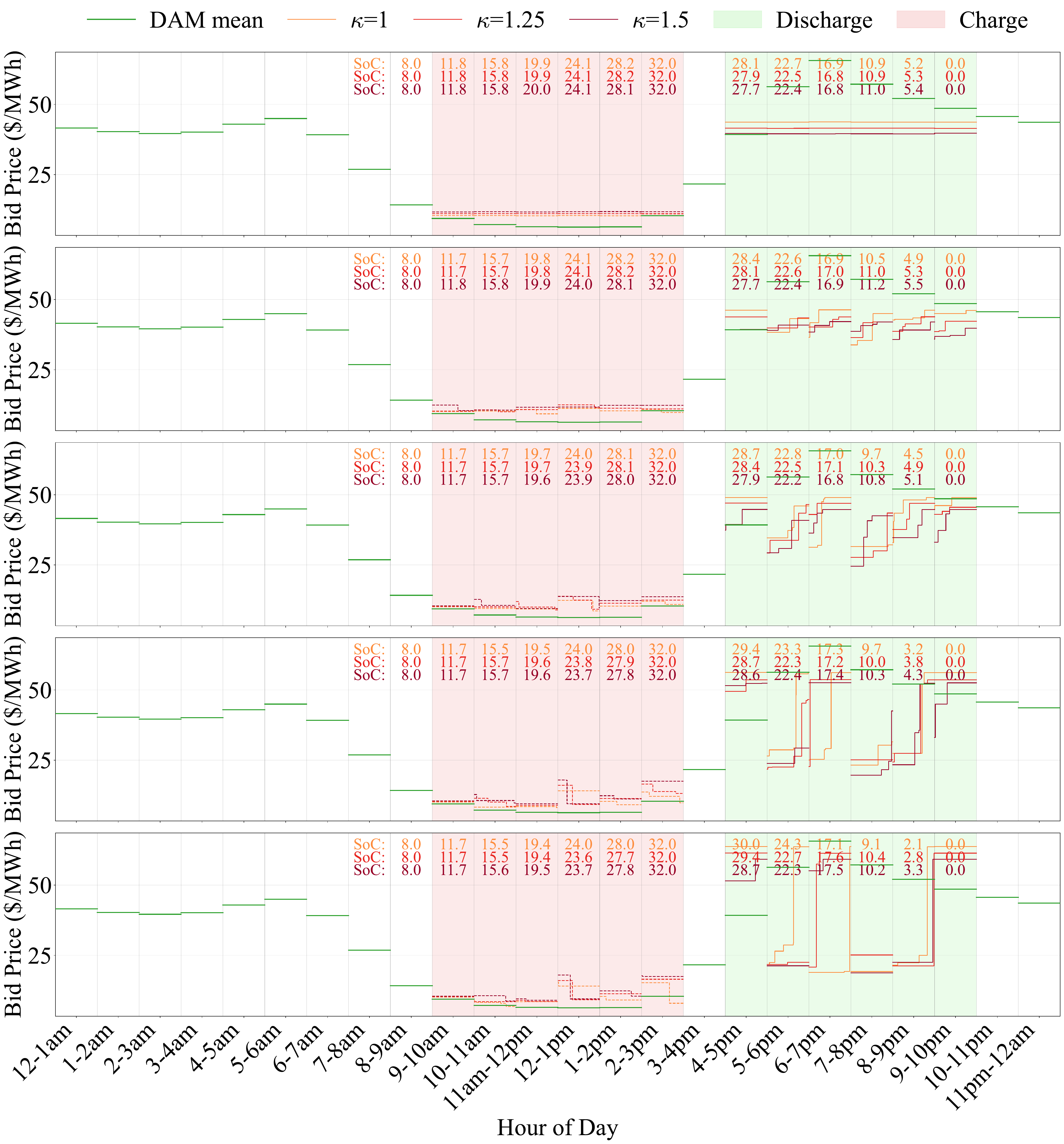}}
  \caption{Risk-Aware Simulation \(s_{8}=8\) and \(\theta\in\{1, 0.95, 0.9, 0.8, 0.7\}\) from top to bottom.}
  \label{exp3_8_results}
\end{figure}

\begin{figure}[htbp]
  \centerline{\includegraphics[width=1\textwidth]{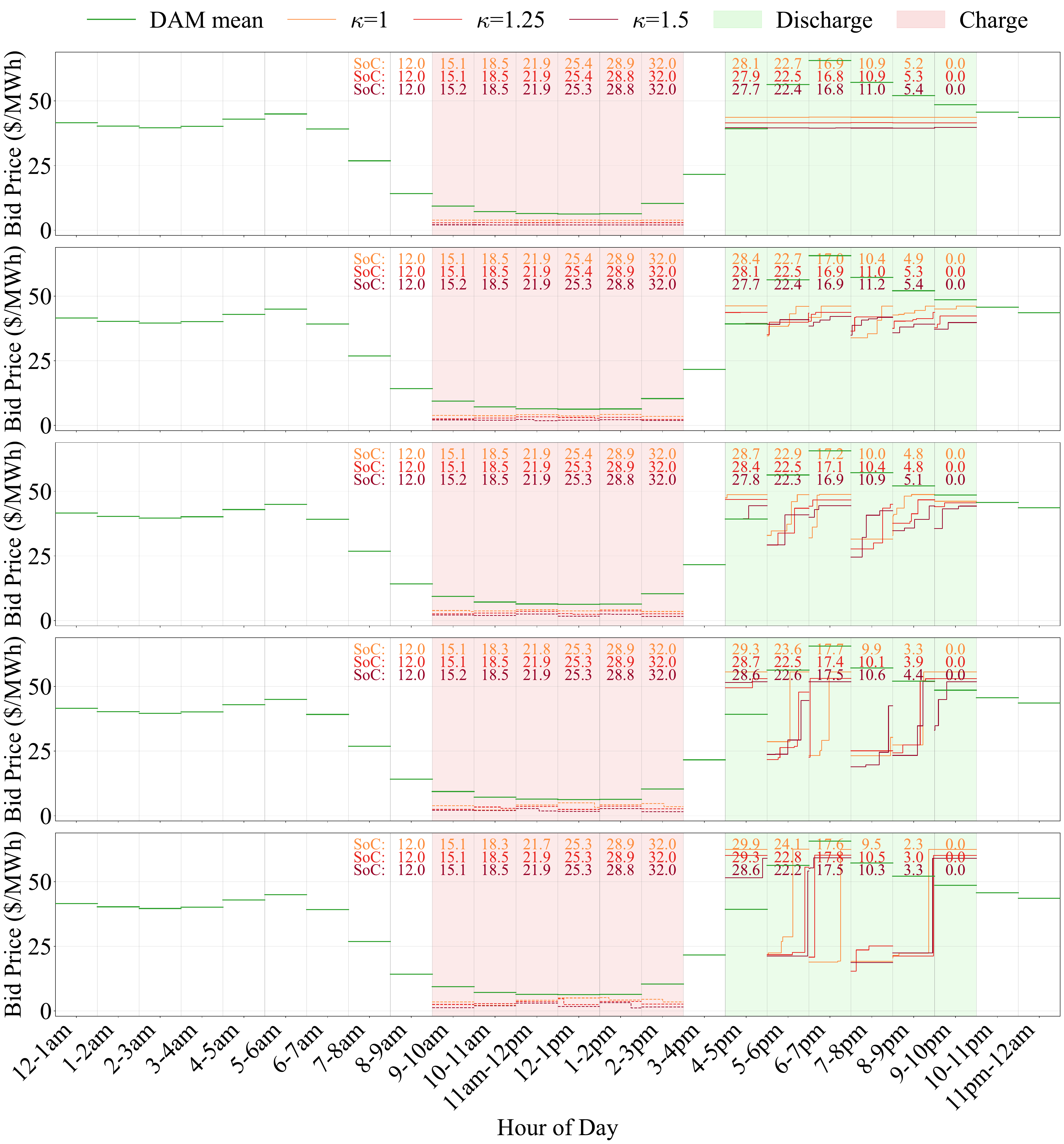}}
  \caption{Risk-Aware Simulation \(s_{8}=12\) and \(\theta\in\{1, 0.95, 0.9, 0.8, 0.7\}\) from top to bottom.}
  \label{exp3_12_results}
\end{figure}

\begin{figure}[htbp]
  \centerline{\includegraphics[width=1\textwidth]{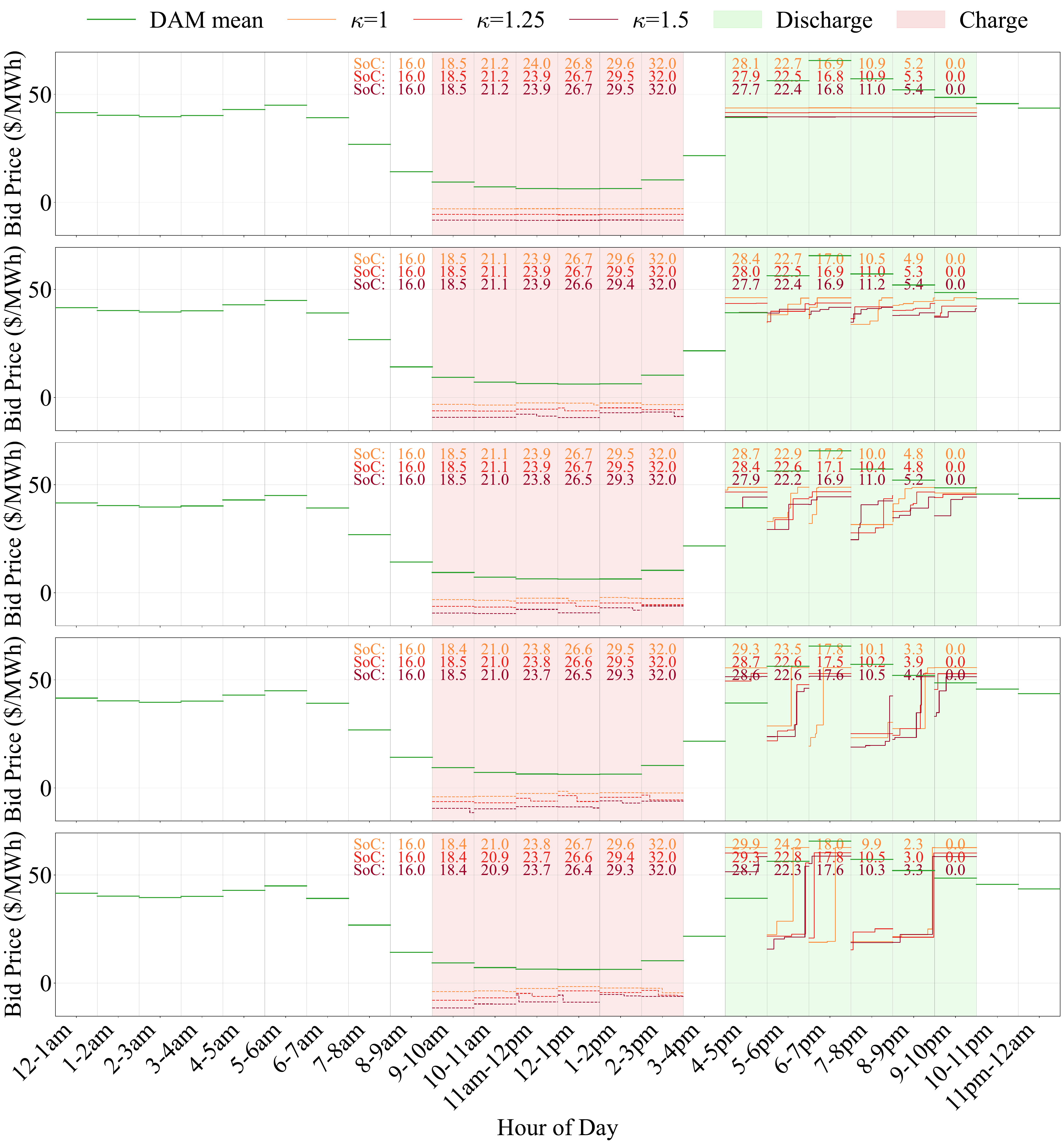}}
  \caption{Risk-Aware Simulation \(s_{8}=16\) and \(\theta\in\{1, 0.95, 0.9, 0.8, 0.7\}\) from top to bottom.}
  \label{exp3_16_results}
\end{figure}

\begin{figure}[htbp]
  \centerline{\includegraphics[width=1\textwidth]{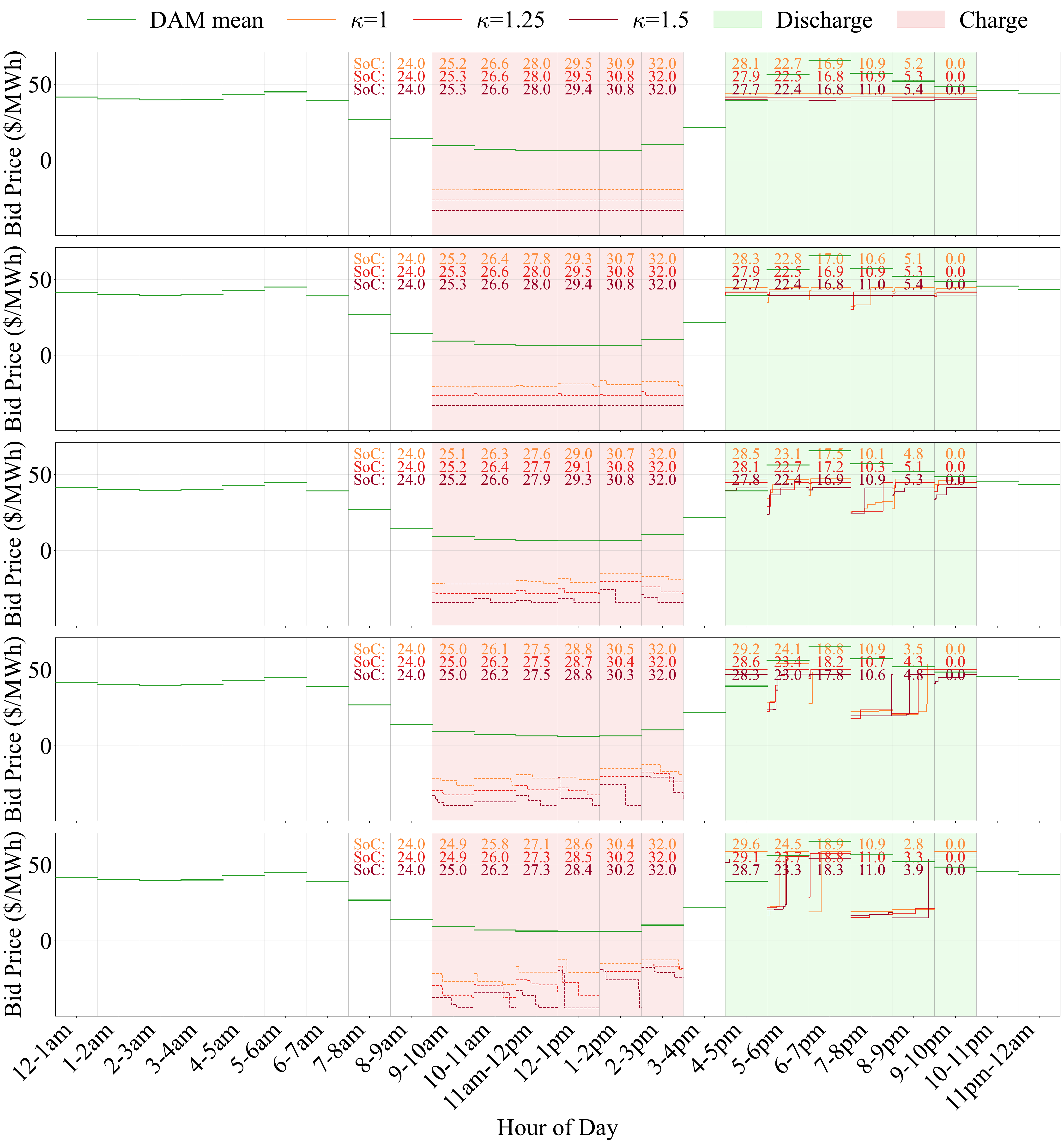}}
  \caption{Risk-Aware Simulation \(s_{8}=24\) and \(\theta\in\{1, 0.95, 0.9, 0.8, 0.7\}\) from top to bottom.}
  \label{exp3_24_results}
\end{figure}

\end{document}